\DeclareMathAlphabet{\mathpzc}{OT1}{pzc}{m}{it}
\newtheorem{theorem}{Theorem}[section]
\newtheorem{lemma}[theorem]{Lemma}
\newtheorem{corollary}[theorem]{Corollary}
\newtheorem{proposition}[theorem]{Proposition}
\newtheorem*{maintheorem}{Main Theorem}
\newtheorem*{theorem*}{Theorem}
\newtheorem*{corollary*}{Corollary}
\newtheorem*{claim1}{Claim 1}
\newtheorem*{claim2}{Claim 2}
\newtheorem*{claim}{Claim}
\newtheorem*{sub-claim}{sub-claim}
\theoremstyle{definition}
\theoremstyle{remark}
\newcommand{\N}{\mathbb{N}}
\newcommand{\B}{\mathcal{B}}
\newcommand{\C}{\mathcal{C}}
\renewcommand{\H}{\mathcal{H}}
\renewcommand{\P}{\mathcal{P}}
\newcommand{\U}{\mathcal{U}}
\newcommand{\V}{\mathcal{V}}
\newcommand{\W}{\mathcal{W}}
\newcommand{\explicitSet}[1]{\left\lbrace #1 \right\rbrace}
\newcommand{\brackets}[1]{\left\langle #1 \right\rangle}
\newcommand{\set}[2]{\explicitSet{#1 \colon #2}}
\newcommand{\seq}[2]{\brackets{#1 \colon #2}}
\newcommand{\<}{\langle}
\renewcommand{\>}{\rangle}
\renewcommand{\a}{\alpha}
\renewcommand{\b}{\beta}
\newcommand{\e}{\varepsilon}
\renewcommand{\k}{\kappa}
\newcommand{\s}{\sigma}
\renewcommand{\t}{\tau}
\newcommand{\w}{\omega}
\newcommand{\0}{\emptyset}
\newcommand{\sub}{\subseteq}
\newcommand{\rest}{\!\restriction\!}
\newcommand{\homeo}{\approx}
\newcommand{\iso}{\cong}
\newcommand{\closure}[1]{\overline{#1}}
\newcommand{\quotient}{\twoheadrightarrow}
\newcommand{\st}{\mathsf{st}}
\newcommand{\clop}{\mathsf{clop}}
\newcommand{\setmins}{\hspace{-.4mm}\setminus\hspace{-.4mm}}
\newcommand{\PP}{\mathbb{P}}
\newcommand{\pwmf}{\mathcal{P}(\w)/\mathrm{fin}}
\newcommand{\pnmf}{\mathcal{P}(\N)/\mathrm{fin}}
\renewcommand{\AA}{\mathbb{A}}
\newcommand{\continuum}{\mathfrak{c}}
\newcommand{\pseudo}{\mathfrak{p}}
\renewcommand{\split}{\mathfrak s}
\newcommand{\ch}{\ensuremath{\mathsf{CH}}\xspace}
\newcommand{\zfc}{\ensuremath{\mathsf{ZFC}}\xspace}
\newcommand{\pfa}{\ensuremath{\mathsf{PFA}}\xspace}
\newcommand{\ocama}{\ensuremath{\mathsf{OCA+MA}}\xspace}
\newcommand{\ma}{\ensuremath{\mathsf{MA}}\xspace}
\newcommand{\masl}{\ensuremath{\mathsf{MA}_\k(\s\mathrm{-linked})}\xspace}
\renewcommand{\quotient}{\twoheadrightarrow}
\newcommand{\Iso}{\mathcal{I}\hspace{-.2mm}\mathpzc{so}}
\newcommand{\quot}{\mathcal{Q}\hspace{-.2mm}\mathpzc{uot}}
\newcommand{\auto}[1]{\mathcal{A}\hspace{-.2mm}\mathpzc{ut}\!\left(#1\right)}
\newcommand{\autstar}{\mathcal{H}\!\left( \w^* \right)}
\newcommand{\sinverse}{\s^{-1}}
\newcommand{\tr}[1]{[\hspace{-.5mm}[#1]\hspace{-.5mm}]}
\newcommand{\gen}[1]{\<\hspace{-1mm}\<\hspace{.5mm}#1\hspace{.5mm}\>\hspace{-1mm}\>}
\newcommand{\toh}{\xrightarrow{\ \!_{\,\!_{h}}\,}}
\newcommand{\tox}[1]{\xrightarrow{\ \!_{\,\!_{#1}}\,}}
\newcommand{\toG}{\xrightarrow{\ \!_{\,\!_{\mathsf{G}}}\,}}
\begin{document}

\title{The isomorphism class of the shift map}
\author{Will Brian}
\address {
W. R. Brian\\
Department of Mathematics and Statistics\\
University of North Carolina at Charlotte\\
9201 University City Blvd.\\
Charlotte, NC 28223-0001}
\email{wbrian.math@gmail.com}
\urladdr{wrbrian.wordpress.com}
\subjclass[2010]{06E25, 08A35, 54H20, 03E35}
\keywords{topological dynamical systems, Stone-\v{C}ech remainder, compact-open topology, shift map}

\begin{abstract}
The \emph{shift map} is the self-homeomorphism of $\w^* = \b\w \setmins \w$ induced by the successor function $n \mapsto n+1$ on $\w$.
We prove that
the isomorphism classes of $\s$ and $\s^{-1}$ cannot be separated by a Borel set in $\autstar$, the space of all self-homeomorphisms of $\w^*$ equipped with the compact-open topology. 

Van Douwen proved it is consistent for $\s$ and $\s^{-1}$ not to be isomorphic. Whether it is also consistent for them to be isomorphic is an open problem. The theorem stated above can be thought of as a counterpoint to van Douwen's result: while $\s$ and $\s^{-1}$ may not be isomorphic, there is no simple topological property that distinguishes them. 

As a relatively straightforward consequence of the main theorem, we deduce that $\ocama$ implies the set of continuous images of $\s$ fails to be Borel in $\autstar$. (Here a ``continuous image'' of $\s$ is meant in the sense of topological dynamics: any $h \in \autstar$ such that $q \circ \s = h \circ q$ for some continuous surjection $q: \w^* \to \w^*$.) This contrasts starkly with a recent theorem of the author showing that under $\ch$, the continuous images of $\s$ form a closed subset of $\autstar$.
\end{abstract}

\maketitle


\section{Introduction}


The \emph{shift map} $\s: \w^* \to \w^*$ sends an ultrafilter $u \in \w^*$ to the unique ultrafilter generated by $\set{A+1}{A \in u}$. Equivalently, $\s$ is the restriction to $\w^*$ of the unique map $\b\w \to \b\w$ that continuously extends the successor function $n \mapsto n+1$ on $\w$.

Let $\autstar$ denote the group of all self-homeomorphisms of $\w^*$. Two maps $f,g \in \autstar$ are \emph{isomorphic} if they are conjugate in this group, i.e., if there is some third $h \in \autstar$ such that $h \circ f = g \circ h$. In this case we say that $h$ is an isomorphism from $f$ to $g$.

\vspace{.5mm}
\begin{center}
\begin{tikzpicture}[xscale=.85,yscale=.85]

\node at (5.1,0.05) {$\w^*$};
\node at (7.1,0.05) {$\w^*$};
\node at (7.1,2.05) {$\w^*$};
\node at (5.1,2.05) {$\w^*$};
\draw[->] (5.4,2) -- (6.7,2); \node at (6,2.25) {\small $f$};
\draw[->] (5.4,0) -- (6.7,0); \node at (6,-.22) {\small $g$};
\draw[<-] (7,.35) -- (7,1.68); \node at (7.2,1) {\small $h$};
\draw[<-] (5,.35) -- (5,1.68); \node at (4.8,1) {\small $h$};
\node at (10,1.25) {\large $f \iso g$};

\end{tikzpicture}
\end{center}
\vspace{-.5mm}

\noindent This is the standard notion of isomorphism in the category of topological dynamical systems. Isomorphic maps are topologically indistinguishable, in the sense that one cannot tell them apart without first assigning labels to the points of $\w^*$.

Beginning in the late 1980's, van Douwen considered the question of whether it is possible for $\s$ and $\s^{-1}$ to be isomorphic. He proved in \cite{vanDouwen} that it is consistent with \zfc for $\s$ and $\s^{-1}$ not to be isomorphic. Specifically, van Douwen showed that if $h \in \autstar$ is an isomorphism from $\s$ to $\s^{-1}$, then $h$ cannot be \emph{trivial}. A trivial map is one that does continuously extends to a function $\b\w \to \b\w$; or, equivalently, trivial maps are those that are ``induced'' by functions $\w \to \w$. Shelah proved in \cite{Shelah} (prior to the appearance of van Douwen's paper) that it is consistent for every member of $\autstar$ to be trivial. Shelah and Stepr\={a}ns showed later that this follows from \pfa \cite{Shelah&Steprans}, and ultimately Veli\v{c}kovi\'c showed that \ocama suffices to prove every $h \in \autstar$ is trivial, while $\ma+\neg$\ch does not suffice \cite{Velickovic}.

One may interpret van Douwen's theorem as more than just a consistency result. By showing that any isomorphism from $\s$ to $\s^{-1}$ must be non-trivial, he showed not only that such an isomorphism need not exist, but also that if such an isomorphism does exist, then it must be somewhat exotic. In other words, $\s$ and $\s^{-1}$ cannot be isomorphic in too simple a way. 

Here we prove a result in the opposite direction. Roughly, the theorem states that, although $\s$ and $\s^{-1}$ may fail to be isomorphic, their isomorphism classes are essentially indistinguishable in $\autstar$, or in other words, $\s$ and $\s^{-1}$ cannot fail to be isomorphic in too simple a way. More precisely:

\begin{maintheorem}
The isomorphism classes of $\s$ and $\s^{-1}$ cannot be separated by a Borel set in $\autstar$, the set of all self-homeomorphisms of $\w^*$ endowed with the compact-open topology.
\end{maintheorem}


In Section~\ref{sec:background}, we introduce some notation and prove a few lemmas concerning the compact-open topology on $\H(X)$ when $X$ is a Stone space. The main result of this section identifies a particularly nice basis for $\H(X)$: basic open sets are represented by finite digraphs showing the action of a map on a clopen partition of $X$. In Section~\ref{sec:dynamics} we observe some topological properties of $\s$ and $\s^{-1}$. Finally, Section~\ref{sec:main} contains a proof of the main theorem. Most of the work is accomplished via a lemma that is perhaps of some independent interest: it states that the isomorphism class of $\s$, considered as a subspace of $\autstar$, is a Baire space. We also derive a corollary: $\ocama$ implies the set of all quotients of $\s$ fails to be Borel in $\autstar$. (The notion of a quotient mapping is defined in Section~\ref{thm:main}.) This contrasts starkly with a recent theorem of the author showing that under $\ch$, the set of all quotients of $\s$ form a closed subset of $\autstar$.

\section{The space $\H(X)$ when $X$ is a Stone space}\label{sec:background}

In what follows, $X$ always denotes a compact Hausdorff space. A homeomorphism from $X$ to itself is called a \emph{topological dynamical system}, and the set of all such homeomorphisms is denoted $\H(X)$. 
The \emph{compact-open topology} on $\H(X)$ is the topology generated by the sets of the form
$$V(K,U) = \set{h \in \mathcal{H}(X)}{h[K] \sub U}$$
where $K \sub X$ is compact and $U \sub X$ is open.


Our first lemma shows that if $X$ is zero-dimensional, the compact-open topology on $\H(X)$ has a particularly nice subbasis. Note that a similar result was proved by Lupton and Pitz in \cite[Section 3]{Lupton&Pitz}, where they study the space $\C(X)$ of continuous self-maps of $X$, endowed with the compact-open topology, of which $\H(X)$ is a subspace. 

Let $\clop(X)$ denote the set of all clopen subsets of $X$. Given $A,B \in \clop(X)$, define 
$\tr{A,B} = \set{h \in \mathcal{H}(X)}{h[A] = B}.$

\begin{lemma}\label{lem:basis}
For any zero-dimensional compact Hausdorff space $X$, 
$$\set{\tr{A,B}}{A,B \in \clop(X)}$$ 
is a subbasis for the compact-open topology on $\H(X)$.
\end{lemma}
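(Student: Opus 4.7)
The plan is to prove the two containments: every $\tr{A,B}$ lies in the compact-open topology, and every subbasic compact-open set $V(K,U)$ is a union of sets of the form $\tr{A,B}$.

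For the first direction, I would observe that for a homeomorphism $h$, the equation $h[A]=B$ holds if and only if $h[A] \sub B$ and $h[X\setmins A] \sub X\setmins B$ (the ``if'' uses that $h$ is a bijection and $A,B$ are complemented). Hence
$$\tr{A,B} \ = \ V(A,B) \cap V(X\setmins A, X\setmins B).$$
Since $A$ and $X\setmins A$ are clopen, they are compact, and since $B$ and $X\setmins B$ are clopen, they are open, so this exhibits $\tr{A,B}$ as a finite intersection of basic compact-open sets.

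For the reverse direction it suffices to show that each $V(K,U)$ (with $K$ compact, $U$ open) is a union of sets $\tr{A,B}$. Fix $h \in V(K,U)$. Since $h$ is continuous, $h^{-1}[U]$ is an open neighborhood of the compact set $K$ in the zero-dimensional compact Hausdorff space $X$. By a standard compactness argument, one may then find a clopen set $A$ with $K \sub A \sub h^{-1}[U]$: for each $x\in K$ pick a clopen neighborhood of $x$ inside $h^{-1}[U]$, extract a finite subcover, and take the union. Let $B = h[A]$; this is clopen because $h$ is a homeomorphism. Then $h \in \tr{A,B}$, and for any $g \in \tr{A,B}$ we have $g[K] \sub g[A] = B = h[A] \sub U$, so $\tr{A,B} \sub V(K,U)$.

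Combining, every point of $V(K,U)$ sits inside some $\tr{A,B} \sub V(K,U)$, so $V(K,U)$ is a union of such sets. Since the $V(K,U)$ generate the compact-open topology, so do the $\tr{A,B}$, proving the lemma. There is no real obstacle here; the only substantive ingredient is the standard fact that in a zero-dimensional compact Hausdorff space every open neighborhood of a compact set contains a clopen neighborhood, and the observation that $\tr{A,B}$ can be expressed using both $A$ and its complement to force surjectivity onto $B$.
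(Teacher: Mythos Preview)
Your proof is correct and follows essentially the same approach as the paper: you express $\tr{A,B}$ as $V(A,B)\cap V(X\setmins A,X\setmins B)$, and for the reverse containment you find for each $h\in V(K,U)$ a clopen $A$ with $K\sub A$ and $h[A]\sub U$, setting $B=h[A]$. The only cosmetic difference is that you phrase the construction of $A$ via $h^{-1}[U]$ rather than via a cover of $K$ by clopen sets whose images lie in $U$, but this is the same argument.
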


\begin{proof}
Let $\tau$ denote the topology on $\H(X)$ generated by 
$$\set{V(K,U)}{K \sub X \text{ is compact and } U \sub X \text{ is open}}$$
and let $\tau'$ denote the topology on $\H(X)$ generated by 
$$\set{\tr{A,B}}{A,B \in \clop(X)}.$$
To prove the lemma, we must show that $\tau = \tau'$.

Observe that if $A,B \in \clop(X)$ then
\begin{align*}
\tr{A,B} &= \set{h \in \H(X)}{h[A] \sub B \text{ and } h[X \setmins A] \sub X \setmins B} \\
&= \set{h \in \H(X)}{h[A] \sub B} \cap \set{h \in \H(X)}{h[X \setmins A] \sub X \setmins B}.
\end{align*}
As all the sets $A$, $B$, $X \setmins A$, and $X \setmins B$ are both compact and open, this shows that $\tr{A,B}$ is the intersection of two subbasic open sets from $\t$. Thus every subbasic open set from $\t'$ is in $\t$, and it follows that $\t' \sub \t$.

Similarly, to show that $\tau \sub \tau'$, it suffices to show that every set of the form $V(K,U)$ is in $\tau'$. This can be deduced directly from the results of Lupton and Pitz in \cite[Section 3]{Lupton&Pitz}. However, this part of their argument is short, and working in $\H(X)$ rather than $\C(X)$ makes it shorter still; so we include a (slighly simplified) version of their argument here.

Let $K \sub X$ be compact and $U \sub X$ open, and suppose $h \in V(K,U)$. Let
$$\U = \set{A \in \clop(X)}{A \cap K \neq \0 \text{ and } h[A] \sub U}.$$
Because $\clop(X)$ is a basis for $X$ and because $h[K] \sub U$, for every $x \in K$ there is some $A \in \clop(X)$ such that $h[A] \sub U$. Thus $\U$ is an open cover for $K$, and as $X$ is compact, $\U$ has a finite subcover $\{A_1,A_2,\dots,A_n\}$. Let $A = \bigcup_{i \leq n} A_i$. Then $A \in \clop(X)$, $A \supseteq K$, and $h[A] = \bigcup_{i \leq n}h[A_i] \sub U$. Furthermore, $h[A]$ is clopen in $X$ (because $h$ is a homeomorhpism).

Thus if $h \in V(K,U)$, then $h \in \tr{A,B}$ for some $A,B \in \clop(X)$ with $A \supseteq K$ and $B \sub U$. Conversely, it is clear that if $A \supseteq K$ and $B \sub U$ then $\tr{A,B} \sub V(K,U)$. Hence
$$V(K,U) = \textstyle \bigcup \set{\tr{A,B}}{A,B \in \clop(X),\, A \supseteq K, \text{ and } B \sub U}.$$
Hence $V(K,U) \in \tau'$, as desired.
\end{proof}

It follows from Lemma~\ref{lem:basis} that $\H(X)$ has a basis of the form 
$$\textstyle \set{\bigcap_{i < n}\tr{A_i,B_i}}{A_0,A_1,\dots,A_{n-1} \text{ are clopen subsets of }X}.$$
The following lemma shows that we can improve this slightly by requiring the $A_i$ to form a partition of $X$ into clopen sets.

\begin{lemma}\label{lem:basis2}
Let $X$ be a zero-dimensional compact Hausdorff space. Then
$$\B = \textstyle \set{\bigcap_{i < n}\tr{A_i,B_i} \vphantom{f^{f^f}}\ }{\set{A_i}{i < n} \text{ is a partition of }X \text{ into clopen sets}}$$
is a basis for the compact-open topology on $\H(X)$.
\end{lemma}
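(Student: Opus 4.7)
The plan is to derive this lemma by refining Lemma~\ref{lem:basis}. From that lemma, finite intersections of the form $U = \bigcap_{i<n}\tr{A_i,B_i}$ with $A_i,B_i \in \clop(X)$ already form a basis, so what I need to show is that, given any such $U$ and any $h \in U$, there is a member of $\B$ containing $h$ and contained in $U$.

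My first step will be to replace $\{A_0,\dots,A_{n-1}\}$ with a clopen partition $\{C_0,\dots,C_{m-1}\}$ refining it; the natural candidate is the collection of atoms of the finite Boolean subalgebra of $\clop(X)$ generated by $A_0,\dots,A_{n-1}$. Each $A_i$ is then a disjoint union of some subfamily of the $C_j$'s. To ensure that $h$ lies in the refined basic set, I would define the ``target'' clopen sets $D_j := h[C_j]$; each $D_j$ is clopen because $h$ is a homeomorphism, so $h \in \bigcap_{j<m}\tr{C_j,D_j} \in \B$.

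The final step is to verify the containment $\bigcap_{j<m}\tr{C_j,D_j} \sub U$. If $g$ lies in the left-hand intersection, then for each $i$,
\[
g[A_i] \,=\, \bigcup_{C_j \sub A_i} g[C_j] \,=\, \bigcup_{C_j \sub A_i} D_j \,=\, \bigcup_{C_j \sub A_i} h[C_j] \,=\, h[A_i] \,=\, B_i,
\]
so $g \in U$. I do not anticipate any real obstacle here; the only mild subtlety is to notice that the target clopen sets $D_j$ must be read off from $h$ itself, rather than being freely chosen as refinements of the $B_i$'s.
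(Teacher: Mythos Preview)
Your proposal is correct and is essentially identical to the paper's own proof: the paper also passes to the finite clopen partition generated by the $A_i$'s and takes the targets to be the $h$-images of the partition pieces. Your write-up is in fact slightly more explicit than the paper's, since you spell out the verification that $g[A_i]=B_i$ while the paper leaves this implicit.
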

\begin{proof}
Every member of $\B$ is open in $\H(X)$ by the previous lemma. 

Now let $U = \bigcap_{i < n}\tr{A_i,B_i}$ be any basic open subset of $\H(X)$ (from the basis described in Lemma~\ref{lem:basis}), and let $h \in U$. Let $\set{A_i}{i < N}$ be the (finite) partition of $X$ generated by $\set{A_i}{i < n}$. Each member of this partition is clopen. Let $V = \bigcap_{i < N}\tr{A_i,h[A_i]}$. Then $V \in \B$ and $h \in V \sub U$. As $U$ and $h$ were arbtirary, this shows $\B$ is a basis for $\H(X)$. 
\end{proof}

If $\V$ is a collection of subsets of $X$ and $h \in \H(X)$, then we associate to $\V$ and $h$ a hitting relation: for $A,B \in \V$, write $A \toh B$ to mean that $h[A] \cap B \neq \0$. This defines a directed graph:
$$\mathsf{Hit}(\V,h) = \<\V,\toh\>.$$
This directed graph, or \emph{digraph}, with vertex set $\V$ and edge relation $\toh$, may have loops and may have two (oppositely directed) edges between some pairs of vertices. As in \cite{Bernardes&Darji} or \cite{Shimomura}, one may view this digraph as a means of capturing the combinatorial content of the action of $h$ on $\V$. One may also view such digraphs as providing an alternative description of the compact-open topology, as the following theorem shows.
If $\V$ is a collection of subsets of $X$ and $G$ is a digraph with vertex set $\V$, then define
$$\gen{\V,\mathsf{G}} = \set{h \in \H(X)}{\mathsf{Hit}(\V,h) = \mathsf{G}}.$$

\begin{theorem}\label{thm:basis}
Let $X$ be a zero-dimensional compact Hausdorff space. Then
\begin{align*}
\B = \left\{ \gen{\V,\mathsf{G}} \vphantom{f^{f^f}}\right. : \ \V \text{ is a partition of }X \text{ into clopen sets}& \\ 
\text{and }\mathsf{G}\text{ is a digraph with vertex set }\V & \left. \vphantom{f^{f^f}}\! \right\}
\end{align*}
is a basis for the compact-open topology on $\H(X)$.
\end{theorem}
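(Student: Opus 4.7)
The plan is to verify two things: that every set in $\B$ is open in the compact-open topology, and that $\B$ is fine enough to generate that topology. By Lemma~\ref{lem:basis2}, for the second task it suffices to show that given any basic open set $U = \bigcap_{i<n}\tr{A_i,B_i}$ (with $\{A_i : i<n\}$ a clopen partition of $X$) and any $h_0 \in U$, one can find $\V,\mathsf{G}$ with $h_0 \in \gen{\V,\mathsf{G}} \sub U$.

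First I will show openness of the sets $\gen{\V,\mathsf{G}}$. Since $X$ is compact, any clopen partition $\V$ is finite, so $\gen{\V,\mathsf{G}}$ is a finite intersection of sets of two types: $\{h : h[W] \cap W' = \emptyset\}$ for each non-edge $(W,W')$, which is the subbasic open $V(W,X\setminus W')$; and $\{h : h[W] \cap W' \neq \emptyset\}$ for each edge, which is open by a local argument: given $h_0$ in the set, pick $x_0 \in W$ with $h_0(x_0) \in W'$, use zero-dimensionality and continuity of $h_0$ to find a nonempty clopen $W_0 \ni x_0$ with $W_0 \sub W$ and $h_0[W_0] \sub W'$, and take $V(W_0, W')$ as a neighborhood of $h_0$ inside the set.

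For the refinement step, I fix $h_0 \in U = \bigcap_{i<n}\tr{A_i,B_i}$, so $h_0[A_i] = B_i$ for every $i$; because $h_0$ is a bijection, $\{B_i\}$ is also a clopen partition. I take $\V$ to be the common refinement of $\{A_i\}$ and $\{B_i\}$, with elements of the form $A_i \cap B_j$, and set $\mathsf{G} = \mathsf{Hit}(\V, h_0)$. Then $h_0 \in \gen{\V,\mathsf{G}}$ holds trivially, and for the containment $\gen{\V,\mathsf{G}} \sub U$ I would fix $h \in \gen{\V,\mathsf{G}}$ and an index $i$ and argue both inclusions of $h[A_i] = B_i$. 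For $\sub$: each $W \in \V$ with $W \sub A_i$ has $h_0[W] \sub h_0[A_i] = B_i$, so $h_0[W]$ meets only $\V$-elements lying in $B_i$; the graph equality forces the same for $h[W]$, yielding $h[W] \sub B_i$ and thus $h[A_i] \sub B_i$. For $\supseteq$: every $W'' \in \V$ with $W'' \not\sub A_i$ satisfies $h_0[W''] \sub B_k$ for some $k \neq i$, so $h_0[W'']$ is disjoint from $B_i$ and meets no $\V$-element in $B_i$; the graph equality transfers this to $h[W'']$, giving $h[X\setminus A_i] \cap B_i = \emptyset$, which forces $B_i \sub h[A_i]$.

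The main obstacle is the choice of the partition $\V$ in the refinement step. Simpler choices like $\{A_i\}$ alone fail because the hitting graph would then only control how $h[A_i]$ distributes among the $A_k$'s, not whether it actually equals $B_i$. Taking the common refinement of the source partition $\{A_i\}$ and the target partition $\{B_i\}$ is what allows the graph equality to simultaneously pin down the source and the target of every sub-piece, and this symmetry is what makes both inclusions of $h[A_i] = B_i$ drop out uniformly.
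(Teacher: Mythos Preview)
Your proof is correct, and your refinement step is in fact sharper than the paper's. For openness the paper writes $\gen{\V,\mathsf{G}}$ as the union $\bigcup\bigl\{\,\bigcap_{i<n}\tr{A_i,B_i}:B_i\text{ clopen and }B_i\cap A_j\neq\0\Leftrightarrow A_i\toG A_j\,\bigr\}$, while you argue directly that each edge and non-edge condition cuts out an open set; both approaches succeed. For the basis step, however, the paper simply sets $\V=\{A_i:i<n\}$ and asserts $\gen{\V,\mathsf{Hit}(\V,h)}\sub U$. As you anticipate in your final paragraph, that containment can fail: the hit graph on $\{A_i\}$ only records which cells $A_j$ each image $h[A_i]$ meets, not that it equals $B_i$. (Concretely, on $X=\{1,\dots,6\}$ with $A_0=\{1,2,3\}$ and $A_1=\{4,5,6\}$, the transpositions $h=(3\ 4)$ and $g=(2\ 5)$ share the same---complete---hit graph on $\{A_0,A_1\}$, yet $g[A_0]=\{1,3,5\}\neq\{1,2,4\}=h[A_0]$.) Your choice of $\V$ as the common refinement of $\{A_i\}$ and $\{B_i\}$ is exactly what makes the containment go through: now every cell of $\V$ lies inside a single $A_i$ and a single $B_j$, so the hit-graph equality pins down both source and target, and your two-inclusion argument for $h[A_i]=B_i$ is clean and complete.
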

\begin{proof}
Let $\V = \set{A_i}{i < n}$ be a partition of $X$ into clopen sets. (Note that every such partition is finite, because $X$ is compact.) Let $\mathsf{G}$ be a digraph with vertex set $\V$ and edge relation $\toG$. Then
\begin{align*}
\gen{\V,\mathsf{G}} = \textstyle \bigcup \left\{ \bigcap_{i < n}\tr{A_i,B_i} \vphantom{f^{f^f}}\right. : \ \text{for each }i < n, \, B_i \text{ is clopen and} & \\ 
B_i \cap A_j \neq \0 \text{ if and only if } A_i \toG A_j & \left. \vphantom{f^{f^f}}\! \right\}.
\end{align*}
By Lemma~\ref{lem:basis2}, $\gen{\V,\mathsf{G}}$ is open. Thus each member of $\B$ is open in $\H(X)$.

Let $U = \bigcap_{i < n}\tr{A_i,B_i}$ be a basic open subset of $\H(X)$ (from the basis described in Lemma~\ref{lem:basis2}), and let $h \in U$. Let $\V = \set{A_i}{i < n}$ and let $V = \gen{\V,\mathsf{Hit}(\V,h)}$. Then $V \in \B$ and $h \in V \sub U$. As $U$ and $h$ were arbtirary, this shows $\B$ is a basis for $\H(X)$. 
\end{proof}

Theorem~\ref{thm:basis} gives us a nice basis for $\H(X)$: a basic open set just specifies the hitting relation for a map on some clopen partition of $X$. In particular, the basic open neighborhoods of some point $h \in \H(X)$ are determined simply by the action of $h$ on some such partition. The next result states that using finer partitions, which give more information about the action of $h$, results in smaller neighborhoods of $h$.

\begin{proposition}\label{prop:refine}
Let $X$ be a zero-dimensional compact Hausdorff space. For each $h \in \H(X)$,
$$\mathcal N_h = \set{\gen{\V,\mathsf{Hit}(\V,h)}}{\V \text{ is a partition of }X\text{ into clopen sets}}$$
is a local basis for $h$ in $\H(X)$. If $\V$ and $\W$ are both partitions of $X$ into clopen sets, and $\W$ refines $\V$, then $\gen{\W,\mathsf{Hit}(\W,h)} \sub \gen{\V,\mathsf{Hit}(\V,h)}$.
\end{proposition}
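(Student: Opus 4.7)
The plan is to deduce both claims as straightforward consequences of Theorem~\ref{thm:basis}.

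For the first claim, that $\mathcal N_h$ is a local basis at $h$, I would start from an arbitrary open neighborhood $U$ of $h$. Theorem~\ref{thm:basis} supplies a clopen partition $\V$ of $X$ and a digraph $\mathsf{G}$ on $\V$ with $h \in \gen{\V,\mathsf{G}} \sub U$. But the condition $h \in \gen{\V,\mathsf{G}}$ pins down $\mathsf{G}$ as precisely $\mathsf{Hit}(\V,h)$, so $\gen{\V,\mathsf{G}}=\gen{\V,\mathsf{Hit}(\V,h)}$ is a member of $\mathcal N_h$ sitting inside $U$.

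For the refinement claim, suppose $\W$ refines $\V$. For each $A \in \V$, let $\W_A = \set{W \in \W}{W \sub A}$, so $A = \bigcup \W_A$. The key observation is that because $g[\,\cdot\,]$ distributes over unions, for any $g \in \H(X)$ and any $A,B \in \V$,
\[
g[A] \cap B \,=\, \textstyle \bigcup \set{g[W] \cap W'}{W \in \W_A,\, W' \in \W_B},
\]
so $g[A]\cap B \neq \0$ if and only if some $W \in \W_A$ and $W' \in \W_B$ satisfy $g[W] \cap W' \neq \0$. Thus the hitting digraph on $\V$ is entirely determined by the hitting digraph on $\W$. Applying this to both $g$ and $h$, any $g$ with $\mathsf{Hit}(\W,g) = \mathsf{Hit}(\W,h)$ automatically satisfies $\mathsf{Hit}(\V,g) = \mathsf{Hit}(\V,h)$, giving $\gen{\W,\mathsf{Hit}(\W,h)} \sub \gen{\V,\mathsf{Hit}(\V,h)}$.

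Neither step presents a serious obstacle; this proposition is essentially a bookkeeping exercise capturing the intuition that finer clopen partitions record strictly more of the combinatorics of $h$, and so yield smaller basic neighborhoods in the basis of Theorem~\ref{thm:basis}.
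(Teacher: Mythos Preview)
Your proposal is correct and follows essentially the same route as the paper: the first claim is deduced directly from Theorem~\ref{thm:basis} together with the observation that $h \in \gen{\V,\mathsf{G}}$ forces $\mathsf{G}=\mathsf{Hit}(\V,h)$, and the second claim is proved by showing that the hitting relation on $\V$ is recoverable from that on $\W$. Your version is, if anything, slightly more explicit than the paper's, since you decompose both $A$ and $B$ into their $\W$-pieces, whereas the paper writes out only the decomposition of $A$ and leaves the corresponding step for $B$ implicit.
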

\begin{proof}
If $h$ is in some basic open set $\gen{\V,\mathsf{G}}$, then $\mathsf{G} = \mathsf{Hit}(\V,h)$ by definition. Thus the first assertion follows from Theorem~\ref{thm:basis}. 

For the second assertion, suppose $\V$ and $\W$ are both partitions of $X$ into clopen sets, and $\W$ refines $\V$. 
Let $g \in \gen{\W,\mathsf{Hit}(\W,h)}$, which means that $\mathsf{Hit}(\W,g) = \mathsf{Hit}(\W,h)$. If $A,B \in \V$, then 
\begin{align*}
A \toh B \quad  \Leftrightarrow& \quad h[A] \cap B \neq \0 \\
\Leftrightarrow& \quad h[A'] \cap B \neq \0 \text{ for some }A' \in \W \ (\text{because } \W \text{ refines }\V) \\
\Leftrightarrow& \quad A' \toh B \text{ for some }A' \in \W \text{ with }A' \sub A \\
\Leftrightarrow& \quad A' \tox{g} B \text{ for some (the same) }A' \in \W \text{ with }A' \sub A \\
\Leftrightarrow& \quad A \tox{g} B. 
\end{align*}
Thus $\mathsf{Hit}(\V,g) = \mathsf{Hit}(\V,h)$, so $g \in \gen{\V,\mathsf{Hit}(\V,h)}$. As $g$ was arbitrary, $\gen{\W,\mathsf{Hit}(\W,g)} \sub \gen{\W,\mathsf{Hit}(\W,h)}$.
\end{proof}

Given a Boolean algebra $\AA$, the set of all ultrafilters on $\AA$, called the \emph{Stone space} of $\AA$ and denoted $\AA^\st$, carries a natural topology with a basis of clopen sets of the form
$\set{u \in \AA^\st}{a \in u}$,
where $a \in \AA$. With this topology, $\AA^\st$ is a zero-dimensional compact Hausdorff space, and $\clop(\AA^\st) \iso \AA$. 
Conversely, given a zero-dimensional compact Hausdorff space $X$, the set $\clop(X)$ forms a Boolean algebra such that $\clop(X)^\st \homeo X$. This correspondence, known as \emph{Stone duality}, reveals that the category of Boolean algebras and the category of zero-dimensional compact Hausdorff spaces (also known as \emph{Stone spaces}) are essentially interchangeable.

If $\AA$ is a Boolean algebra, then an automorphism mapping $\AA$ to itself is called an \emph{algebraic dynamical system}, and the set of all such automorphisms is denoted $\auto{\AA}$. 
The \emph{topology of pointwise convergence} on $\auto{\AA}$ is the topology generated by the sets of the form
$$\tr{a,b} = \set{\phi \in \auto{\AA}}{\phi(a) = b}$$
where $a,b \in \AA$. In other words, the basic open subsets of $\auto{\AA}$ are determined by specifying the action of a map at finitely many points of $\AA$.

Stone duality extends naturally from Boolean algebras and Stone spaces to their respective self-maps. If $X$ is a Stone space and $h \in \H(X)$, then $h$ permutes the clopen subsets of $X$ and thus defines an automorphism of $\clop(X)$; formally, $h^\st \in \auto{\clop(X)}$ is defined by setting
$$h^\st(A) = h[A]$$
for every clopen $A \sub X$.
Similarly, if $\AA$ is a Boolean algebra and $\phi \in \auto{\AA}$, then $\phi$ permutes the ultrafilters on $\AA$ and thus defines a self-homeomorphism of $\AA^\st$; formally, $\phi^\st \in \H(\AA^\st)$ is defined on each $u \in \AA^\st$ by taking
$$a \in \phi^\st(u) \ \Leftrightarrow \ \phi^{-1}(a) \in u$$
for all $a \in \AA$.  
One may check that $\phi \mapsto \phi^\st$ is a bijection $\auto{\AA} \to \H(\AA^\st)$, and $h \mapsto h^\st$ is a bijection $\H(X) \to \auto{\clop(X)}$. Furthermore, these bijections are inverse to one another: after identifying $\AA$ with $\clop(\AA^\st)$ in the natural way, $(\phi^\st)^\st = \phi$ for all $\phi \in \auto{\AA}$, and after identifying $\clop(X)^\st$ with $X$ in the natural way, $(h^\st)^\st = h$ for all $h \in \H(X)$.

\begin{proposition}
Suppose $X$ is a Stone space.
The compact-open topology on $\H(X)$ is dual to the topology of pointwise convergence on $\auto{\clop(X)}$, in the sense that the mapping $h \mapsto h^\st$ that sends $\H(X)$ to $\auto{\clop(X)}$ is a homeomorphism.
Similarly, if $\AA$ is a Boolean algebra, then the mapping $\a \mapsto \a^\st$ that sends $\auto{\AA}$ to $\H(\AA^\st)$ is a homeomorphism.
\end{proposition}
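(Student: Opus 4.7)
The plan is to show that the bijection $h \mapsto h^\st$ carries the natural subbasis for the compact-open topology on $\H(X)$ (supplied by Lemma~\ref{lem:basis}) onto the natural subbasis for the topology of pointwise convergence on $\auto{\clop(X)}$ (given by definition). Since the bijection has already been verified in the preceding paragraphs, homeomorphism follows immediately once the subbases are matched up.

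First, I would observe that the two notational uses of $\tr{A,B}$ coincide under duality: for $A, B \in \clop(X)$ and $h \in \H(X)$,
\[
h \in \tr{A,B} \ \Leftrightarrow \ h[A] = B \ \Leftrightarrow \ h^\st(A) = B \ \Leftrightarrow \ h^\st \in \tr{A,B},
\]
where the left-hand $\tr{A,B}$ denotes the subset of $\H(X)$ and the right-hand one denotes the corresponding subset of $\auto{\clop(X)}$. Thus the bijection $h \mapsto h^\st$ restricts to a bijection between $\{\tr{A,B} : A,B \in \clop(X)\}$ inside $\H(X)$ and $\{\tr{a,b} : a,b \in \clop(X)\}$ inside $\auto{\clop(X)}$. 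By Lemma~\ref{lem:basis}, the former is a subbasis for the compact-open topology on $\H(X)$, while the latter is a subbasis for the topology of pointwise convergence on $\auto{\clop(X)}$ by definition. A bijection between two spaces that identifies a subbasis of one with a subbasis of the other is a homeomorphism, so $h \mapsto h^\st$ is a homeomorphism.

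For the second assertion, I would appeal to Stone duality. Given a Boolean algebra $\AA$, identify $\AA$ with $\clop(\AA^\st)$ in the canonical way; then the map $\phi \mapsto \phi^\st$ from $\auto{\AA}$ to $\H(\AA^\st)$ is, by the remarks preceding the proposition, the inverse of the map $h \mapsto h^\st$ from $\H(\AA^\st)$ to $\auto{\clop(\AA^\st)} = \auto{\AA}$. Since the latter has just been shown to be a homeomorphism, so is its inverse.

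Since the argument is really just a bookkeeping check against Lemma~\ref{lem:basis}, I do not anticipate any genuine obstacle; the only thing that requires care is making sure the two senses of the notation $\tr{A,B}$ (the topological one and the algebraic one) are aligned under the duality, which is immediate from the definition $h^\st(A) = h[A]$.
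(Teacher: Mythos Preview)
Your proposal is correct and follows essentially the same approach as the paper's proof, which is the one-line observation that each of these maps lifts to a bijection between the subbasic open sets of the domain and those of the range. You have simply unpacked this observation in more detail, verifying explicitly that the two meanings of $\tr{A,B}$ align under $h \mapsto h^\st$ and deducing the second assertion from the first via the inverse relationship already noted in the text.
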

\begin{proof}
Each of these maps lift to a bijection from the subbasic open sets of the domain onto the subbasic open sets of the range.
\end{proof}
%

We end this section with a theorem due to Arens. Together with the other results in this section, it indicates that the compact-open topology is the ``right'' topology for $\H(X)$.

\begin{proposition}\label{prop:group} $(\mathrm{Arens}$, \cite[Theorem 3]{Arens}$)$
When endowed with the compact-open topology, $\H(X)$ is a topological group (with composition as the group operation). Furthermore, it is the coarsest topology on $\H(X)$ that makes it a topological group and has the property that the evaluation map $(h,x) \mapsto h(x)$ is a continuous function $\H(X) \times X \to X$.
\end{proposition}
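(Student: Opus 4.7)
The plan is to break the proposition into three claims, treating them in sequence: first, that composition and inversion are continuous in the compact-open topology, so $\H(X)$ is a topological group; second, that the evaluation map is continuous; and third, that any topology on $\H(X)$ making evaluation continuous must refine the compact-open topology. Throughout, the key standing hypothesis is that $X$ is compact Hausdorff, hence normal and regular. To verify continuity of each map it suffices to pull back subbasic open sets of the form $V(K,U)$.

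For the group structure, inversion is immediate: $h^{-1} \in V(K,U)$ is equivalent to $h[X \setmins U] \sub X \setmins K$, and since $X \setmins U$ is compact and $X \setmins K$ is open, this is exactly the subbasic condition $h \in V(X \setmins U, X \setmins K)$. For composition, suppose $f_0 \circ g_0 \in V(K,U)$, so $g_0[K] \sub f_0^{-1}[U]$. By normality of $X$, I interpose a compact set: choose an open $W$ with $g_0[K] \sub W \sub \closure{W} \sub f_0^{-1}[U]$. Then $(f_0, g_0) \in V(\closure{W}, U) \times V(K, W)$, and this product neighborhood is mapped into $V(K,U)$ under composition.

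For evaluation continuity, given $(h_0, x_0)$ with $h_0(x_0) \in U$, use regularity to find an open $W \ni x_0$ with $h_0[\closure{W}] \sub U$; then $V(\closure{W}, U) \times W$ is a neighborhood of $(h_0, x_0)$ mapped into $U$ by evaluation. For the coarsest-topology claim, suppose $\t$ is any topology on $\H(X)$ for which evaluation is continuous. Fix $V(K,U)$ and $h_0 \in V(K,U)$. For each $x \in K$, continuity of evaluation in $\t$ yields a $\t$-open $\O_x \ni h_0$ and an open $W_x \ni x$ whose product maps into $U$. The $W_x$ cover the compact set $K$, so finitely many suffice, and the intersection of the corresponding $\O_x$ is a $\t$-open neighborhood of $h_0$ contained in $V(K,U)$. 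Hence $V(K,U) \in \t$, and $\t$ refines the compact-open topology.

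The main obstacle is the continuity of composition, where one must simultaneously vary both $f$ and $g$; the decisive move is interposing $\closure{W}$ between $g_0[K]$ and $f_0^{-1}[U]$, which requires both compactness and Hausdorffness of $X$. I also note that the coarsest-topology argument uses only the continuity of evaluation, not the topological-group axioms, so the compact-open topology is already the coarsest among all topologies on $\H(X)$ making evaluation continuous; the topological-group requirement merely narrows the class of competing topologies without affecting the conclusion.
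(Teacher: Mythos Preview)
Your proof is correct. Each of the three parts is handled cleanly: the inversion argument via $V(K,U) \leftrightarrow V(X\setminus U, X\setminus K)$ is right (using compactness of $X$ and Hausdorffness so that $X\setminus K$ is open), the composition argument via an interposed $\closure{W}$ is the standard and correct move, and the evaluation and minimality arguments are fine. Your closing remark is also correct: the compact-open topology is already coarsest among \emph{all} topologies making evaluation continuous, so restricting to topological-group topologies is harmless.

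The paper, however, does not prove this proposition at all; it simply attributes it to Arens and cites \cite[Theorem 3]{Arens}. The only argument the paper offers is a one-line remark after the statement, noting that for Stone spaces the continuity of inversion is immediate from Lemma~\ref{lem:basis}, since $h \mapsto h^{-1}$ permutes the subbasic sets $\tr{A,B} \leftrightarrow \tr{B,A}$. So your write-up supplies a complete self-contained proof where the paper defers to the literature; there is no substantive comparison of approaches to be made beyond that.
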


\noindent When we cite this proposition in the following sections, we will really only need the fact that $\H(X)$ is a topological group. Note that this part of Arens's result is fairly easy for Stone spaces: for example, given Proposition 2.1 above, the inversion operation $h \mapsto h^{-1}$ is continuous because it is a bijection that permutes the subbasic open sets, sending $\tr{A,B}$ to $\tr{B,A}$ whenever $A,B \in \clop(X)$.

\section{The topological dynamics of the shift map}\label{sec:dynamics}

Let $X$ be a compact Hausdorff space and let $h \in \H(X)$ be a topological dynamical system. Given an open cover $\U$ of $X$, we say that a sequence of points $\seq{x_i}{i \leq n}$ is a $\U$\emph{-chain} if, for every $i < n$, there is some $U \in \U$ such that $h(x_i),x_{i+1} \in U$. Roughly, one may think of a $\U$-chain as a finite piece of an $h$-orbit, but computed with a small error at each step, where the allowed size of the error is determined by the fineness of $\U$. A dynamical system $h$ is \emph{chain transitive} if for any $a,b \in X$ and any open cover $\U$ of $X$, there is a $\U$-chain beginning at $a$ and ending at $b$.

\begin{lemma}\label{lem:ct0}$\ $
Let $X$ be a compact Hausdorff space. A topological dynamical system $h \in \H(X)$ is chain transitive if and only if $h(\closure{U}) \not\sub U$ for every open $U\neq \0,X$. If $X$ is also zero-dimensional, then $h$ is chain transitive if and only if $h[A] \not\sub A$ for every clopen $A \neq \0,X$. 
\end{lemma}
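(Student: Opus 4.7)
My plan is to prove the first biconditional first, then derive the zero-dimensional version as a relatively straightforward consequence.

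For the forward direction of the first biconditional, I would argue the contrapositive: suppose some open $U$ with $\0 \neq U \neq X$ satisfies $h(\closure{U}) \sub U$. I would first note that $\closure{U} \neq X$, since otherwise $h(\closure{U}) = h(X) = X \sub U$ would force $U = X$. Then I would consider the open cover $\U = \{U,\, X \setmins h(\closure{U})\}$ (which is a cover because $X \setmins U \sub X \setmins h(\closure{U})$). Pick $a \in \closure{U}$ and $b \in X \setmins \closure{U}$. The key combinatorial observation is that any $\U$-chain starting in $\closure{U}$ is trapped there: if $x_i \in \closure{U}$, then $h(x_i) \in h(\closure{U})$ lies in $U$ and not in $X \setmins h(\closure{U})$, so the linking set must be $U$, forcing $x_{i+1} \in U \sub \closure{U}$. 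By induction no $\U$-chain from $a$ reaches $b$, so $h$ is not chain transitive.

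For the reverse direction, fix $a,b \in X$ and an open cover $\U$; I would define $R(a)$ as the set of points reached from $a$ by a $\U$-chain of length $\geq 1$. I would verify two properties. First, $R(a)$ is open: if $x \in R(a)$ via the chain $\seq{a = x_0,\dots,x_n = x}$ with linking set $U_n \in \U$ (so $h(x_{n-1}), x_n \in U_n$), then every point of $U_n$ can replace $x_n$, so $U_n$ is an open neighborhood of $x$ contained in $R(a)$. Second, and this is the heart of the argument, $h(\closure{R(a)}) \sub R(a)$: given $y \in \closure{R(a)}$, pick $U \in \U$ with $h(y) \in U$; by continuity $h^{-1}(U)$ is an open neighborhood of $y$, hence meets $R(a)$ at some point $z$; then any chain $\seq{a,\ldots,z}$ can be extended by $h(y)$ since $h(z), h(y) \in U$. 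Finally, $R(a) \neq \0$ because $h(a) \in R(a)$ via $\seq{a, h(a)}$ (taking any $U \in \U$ with $h(a) \in U$). Thus $R(a)$ is a nonempty open set with $h(\closure{R(a)}) \sub R(a)$; by hypothesis, $R(a) = X$, so $b \in R(a)$.

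For the zero-dimensional refinement, the forward direction is immediate: clopen sets are their own closure, so chain transitivity directly gives $h[A] \not\sub A$. For the reverse, I would use the contrapositive via the first equivalence: assuming $h(\closure{U}) \sub U$ for some open $U \neq \0, X$, I would use zero-dimensionality to sandwich the compact set $h(\closure{U})$ inside a clopen set $A$ with $h(\closure{U}) \sub A \sub U$. Then $A \sub \closure{U}$ gives $h[A] \sub h(\closure{U}) \sub A$; $A \sub U \neq X$ gives $A \neq X$; and $\0 \neq h[U] \sub A$ gives $A \neq \0$, producing the forbidden clopen invariant set.

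The main obstacle is the reverse direction of the first biconditional — specifically the verification that $h(\closure{R(a)}) \sub R(a)$. Everything else is either a bookkeeping induction, a standard compact-open separation argument, or a direct application of zero-dimensionality; this step is where continuity of $h$ interacts substantively with the definition of a $\U$-chain.
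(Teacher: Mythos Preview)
Your proof is correct. For the first biconditional the paper simply cites Akin's book and remarks that the metric argument goes through with open covers, whereas you supply a self-contained proof; your ``reachable set'' argument is the standard one and is carried out cleanly. For the zero-dimensional refinement both you and the paper argue the contrapositive by producing a clopen set from the given open $U$, though the placements differ slightly: the paper invokes the proof of Lemma~\ref{lem:basis} to find a clopen $A \supseteq \closure{U}$ with $h[A] \sub U$ (so $h[A] \sub U \sub \closure{U} \sub A$), while you sandwich the other way, taking clopen $A$ with $h(\closure{U}) \sub A \sub U$ (so $h[A] \sub h(\closure{U}) \sub A$). These are minor variants of the same idea and each works without difficulty.
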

\begin{proof}
The first assertion is proved for metrizable dynamical systems in \cite[Theorem 4.12]{Akin}, using open covers that consist of $\e$-balls. The proof does not make any use of metrizability, and so a superficial modification of it yields a proof of the first assertion of the present lemma.

For the second assertion, suppose $X$ is zero-dimensional. It suffices to show that if $h(\closure{U}) \sub U$ for some open $U \neq \0,X$, then $h[A] \sub A$ for some clopen $A \neq \0,X$. Notice that the assertion ``$h(\closure{U}) \sub U$'' is equivalent to the assertion that $h$ is in the subbasic open subset $V(\closure{U},U)$ of $\H(X)$. In the proof of Lemma~\ref{lem:basis} above, we show (using the fact that $X$ is a Stone space) that $h \in V(\closure{U},U)$ implies there is some clopen $A \supseteq \closure{U}$ such that $h[A] \sub U$. This set $A$ is as required.
\end{proof}

\begin{corollary}\label{cor:shiftisct}
For any compact Hausdorff space $X$, the set of all chain transitive maps is closed in $\H(X)$.
\end{corollary}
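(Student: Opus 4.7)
The plan is to reduce to the characterization given by Lemma~\ref{lem:ct0} and then observe that the ``bad'' (i.e.\ non-chain-transitive) maps are precisely those lying in a certain union of subbasic open sets of the compact-open topology.

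More concretely, I would argue the complement of the set of chain transitive maps is open. By Lemma~\ref{lem:ct0}, a map $h \in \H(X)$ fails to be chain transitive if and only if there exists an open set $U \sub X$ with $U \neq \0,X$ such that $h(\closure{U}) \sub U$. The condition ``$h[\closure{U}] \sub U$'' is exactly the statement that $h$ belongs to the subbasic open set $V(\closure{U},U)$ in the compact-open topology (this is a legitimate subbasic set because $\closure{U}$ is compact and $U$ is open). Therefore
\[
\H(X) \setmins \{h : h \text{ is chain transitive}\} \;=\; \bigcup \set{V(\closure{U},U)}{U \text{ open},\ U \neq \0, X},
\]
which is a union of subbasic open sets, hence open. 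It follows that the chain transitive maps form a closed subset of $\H(X)$.

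There is essentially no obstacle here; everything hinges on the topological characterization of chain transitivity from Lemma~\ref{lem:ct0}, and the result would be straightforward once one unpacks the definition of $V(\closure{U},U)$. The only point that might deserve a word of care is that Lemma~\ref{lem:ct0} is stated for $h \in \H(X)$, and we only apply the forward implication of its characterization to each $h$ in the complement of the chain transitive set, so no additional hypothesis (such as zero-dimensionality) is needed for the corollary in this generality.
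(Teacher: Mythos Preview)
Your proof is correct and is essentially identical to the paper's own argument: both invoke Lemma~\ref{lem:ct0} to express the set of non-chain-transitive maps as the union $\bigcup \set{V(\closure{U},U)}{U \text{ open},\ U \neq \0,X}$, which is open in the compact-open topology. One small quibble: your final remark says you ``only apply the forward implication,'' but in fact you use both directions of the equivalence in Lemma~\ref{lem:ct0} to get the displayed equality of sets; the relevant point (which you also note) is simply that the first assertion of Lemma~\ref{lem:ct0} holds for arbitrary compact Hausdorff $X$, so no zero-dimensionality is needed.
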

\begin{proof}
By the previous lemma, a topological dynamical system $h \in \H(X)$ fails to be chain transitive if and only if
\begin{align*}
h \in \textstyle \bigcup \set{V(\closure{U},U)}{U \text{ is open and } U \neq \0,X}.
\end{align*}
Thus the set of non-chain-transitive maps is open in $\H(X)$.
\end{proof}

A \emph{path} in a directed graph $\mathsf{G}$ is a finite sequence $\seq{v_i}{i \leq n}$ of vertices of $\mathsf{G}$ such that for every $i < n$, there is an edge from $v_i$ to $v_{i+1}$. An \emph{infinite path} in $\mathsf{G}$ is an infinite sequence $\seq{v_i}{i < \w}$ of vertices of $\mathsf{G}$ such that for every $i < n$, there is an edge from $v_i$ to $v_{i+1}$. A directed graph is \emph{transitive} if for any two of its vertices $v$ and $w$, there is a path beginning at $v$ and ending at $w$.

For convenience, let us henceforth adopt the convention that an open cover of a space does not contain the empty set.

\begin{lemma}\label{lem:hit}
Let $X$ be any compact Hausdorff space. A topological dynamical system $h \in \H(X)$ is chain transitive if and only if $\mathsf{Hit}(\V,h)$ is transitive for every open cover $\V$ of $X$.
\end{lemma}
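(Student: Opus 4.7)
My plan is to prove the two implications separately: the forward direction by a direct chain-to-path construction, and the reverse direction by contrapositive, piggybacking on the characterization of chain transitivity given in Lemma~\ref{lem:ct0}.

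For the forward direction, I would start with an open cover $\V$ and vertices $A, B \in \V$, fix $a \in A$ and $b \in B$, and set $c = h^{-1}(b)$. Applying chain transitivity with the cover $\V$ produces a $\V$-chain $a = x_0, x_1, \dots, x_n = c$ together with witnessing sets $U_0, \dots, U_{n-1} \in \V$ such that $h(x_i), x_{i+1} \in U_i$ for each $i < n$. The sequence $A, U_0, U_1, \dots, U_{n-1}, B$ is then a path in $\mathsf{Hit}(\V, h)$: the initial edge $A \to U_0$ is witnessed by $h(a) \in h[A] \cap U_0$, each intermediate edge $U_i \to U_{i+1}$ by $h(x_{i+1}) \in h[U_i] \cap U_{i+1}$, and the final edge $U_{n-1} \to B$ by $h(c) = b \in h[U_{n-1}] \cap B$. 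In the degenerate case $n = 0$, the sequence collapses to $A, B$ with the single edge witnessed by $h(a) = b$. The one technical choice worth flagging is routing through $c = h^{-1}(b)$ rather than $b$ itself; this is what guarantees the last edge lands inside $B$.

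For the reverse direction I argue the contrapositive. Suppose $h$ is not chain transitive. Lemma~\ref{lem:ct0} yields an open set $U$ with $\0 \neq U \neq X$ and $h(\closure{U}) \sub U$. Set $\tilde{U} = h^{-1}(U)$, which is open and contains $\closure{U}$ (since $h(\closure{U}) \sub U$), and consider the two-element open cover $\V = \{\tilde{U},\, X \setmins \closure{U}\}$. This is a genuine cover because $\closure{U} \sub \tilde{U}$, and both members are nonempty: $\tilde{U} \supseteq U \neq \0$ trivially, while $X \setmins \closure{U} \neq \0$ since otherwise surjectivity of $h$ would force $U = X$. Now $h[\tilde{U}] = U \sub \closure{U}$, so $h[\tilde{U}]$ is disjoint from $X \setmins \closure{U}$; hence the only outgoing edge from $\tilde{U}$ in $\mathsf{Hit}(\V, h)$ is its self-loop, and there is no path from $\tilde{U}$ to $X \setmins \closure{U}$. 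Therefore $\mathsf{Hit}(\V, h)$ fails to be transitive.

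The main subtle point is that one cannot prove the reverse implication directly by translating a hit-path $V_0, \dots, V_k$ into a chain via witness points $z_i \in V_i$ with $h(z_i) \in V_{i+1}$: such a construction fails to produce a chain whose endpoints are the prescribed $a$ and $b$, because $h(a)$ need not lie in $V_1$ and $b$ need not lie in $h[V_{k-1}]$. Routing the reverse direction through Lemma~\ref{lem:ct0} sidesteps this endpoint-mismatch issue entirely.
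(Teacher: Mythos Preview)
Your proof is correct and follows essentially the same strategy as the paper: the forward direction converts a $\V$-chain into a path in $\mathsf{Hit}(\V,h)$, and the reverse direction is by contrapositive via Lemma~\ref{lem:ct0}, exhibiting a two-element open cover whose hit graph is not transitive. Your tactical choices differ slightly---routing through $c = h^{-1}(b)$ to nail down the terminal edge, and taking $\tilde U = h^{-1}(U)$ in place of the paper's normality-based intermediate set $V$---but these are minor variations within the same argument, and if anything make the endpoint bookkeeping a bit cleaner.
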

\begin{proof}
Suppose $h \in \H(X)$ is chain transitive. Let $\V$ be an open cover of $X$, and let $U,V \in \V$. Pick any $a \in U$ and $b \in V$, and let $\seq{x_i}{i \leq n}$ be a $\V$-chain with $x_0 = a$ and $x_n = b$. For each $0 < i < n$, fix some $U_i \in \V$ such that $x_i \in U_i$. Then
$$U \toh U_1 \toh U_2 \toh \dots \toh U_{n-1} \toh V$$
and as $U$ and $V$ were arbitrary, this shows $\mathsf{Hit}(\V,h)$ is transitive.

Now suppose $h \in \H(X)$ fails to be chain transitive. By Lemma~\ref{lem:ct0}, there is some open $U \neq \0,X$ such that $h(\closure{U}) \sub U$. Because $X$ is compact, $h(\closure{U})$ is closed; recalling that every compact Hausdorff space is normal, there is some open $V \sub X$ with $h(\closure{U}) \sub V \sub \closure{V} \sub U$. This implies that $\V = \{U,X \setminus \closure{V}\}$ is an open cover of $X$. Because $h(\closure{U}) \sub V$, we have $h(\closure{U}) \cap (X \setminus \closure{V}) = \0$. Using the surjectivity of $h$, and the fact that $U \neq \0,X$, one may check that $h(X \setmins \closure{V} \neq \0 \neq h(U) \cap U$. Thus the graph $\mathsf{Hit}(\V,h)$ has the following form:

\vspace{-1.5mm}
\begin{center}
\begin{tikzpicture}[xscale=.8,yscale=.8]

\draw[fill=black] (0,0) circle (2pt);
\draw[fill=black] (5,0) circle (2pt);
\draw[->] (.25,0) -- (4.75,0);

\draw[->] (.15,.15) .. controls (.75,1.2) and (-.75,1.2) .. (-.15,.15);
\draw[->] (5.15,.15) .. controls (5.75,1.2) and (4.25,1.2) .. (4.85,.15);

\draw (5.1,-.5) node {\footnotesize$U$};
\draw (-.1,-.5) node {\footnotesize$X \setminus \closure{V}$};

\end{tikzpicture}
\end{center}
\vspace{-1.5mm}

\noindent In particular, this graph is not transitive.
\end{proof}

In what follows, our focus will be on the space $\w^*$, the Stone space of the Boolean algebra $\pwmf$. Given $A \sub \w$ and its mod-finite equivalence class $[A] \in \pwmf$, we will write $A^*$ instead of $[A]^*$ for the corresponding clopen subset of $\w^*$.

\begin{lemma}\label{lem:partition}
Suppose $A \sub \w$ is infinite. If $\V$ is a partition of $A^*$ into clopen sets, then there is a finite partition $\P$ of $A$ into infinite sets such that $\V = \set{B^*}{B \in \P}$.
\end{lemma}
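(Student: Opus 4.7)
The plan is to reduce the problem to a clean-up of a mod-finite partition of $A$. There is no real obstacle; the only things that need care are the justification that $\V$ is finite and the final adjustment to turn a mod-finite partition into an honest partition into \emph{infinite} sets.

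First I would observe that $\V$ must be finite. Each member of $\V$ is clopen, hence open in $A^*$, and since $A \sub \w$ is infinite, $A^*$ is a nonempty clopen (hence compact) subset of $\w^*$. Thus $\V$ is an open cover of the compact space $A^*$ by pairwise disjoint sets, so it is finite: write $\V = \{V_1, \dots, V_n\}$, and assume each $V_i \neq \0$ (partitions in this paper do not contain the empty set).

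Next I would invoke Stone duality to represent each $V_i$. Every clopen subset of $\w^*$ has the form $C^*$ for some $C \sub \w$, so pick $C_i \sub \w$ with $V_i = C_i^*$; replacing $C_i$ by $C_i \cap A$, we may assume $C_i \sub A$. Because the $V_i$ partition $A^*$, Stone duality (together with the fact that $D^* = \0$ iff $D$ is finite) gives that $C_i \cap C_j$ is finite for $i \neq j$ and that $A \setmins \bigcup_{i \le n} C_i$ is finite. So $\{C_1, \dots, C_n\}$ is a partition of $A$ modulo finite with $C_i^* = V_i$, and each $C_i$ is infinite since $V_i \neq \0$.

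Finally, I would turn this mod-finite partition into an honest one. Set $B_i = C_i \setmins (C_1 \cup \dots \cup C_{i-1})$ for each $i$. Since $C_i \cap C_j$ is finite for every $j < i$, we have $B_i =^* C_i$, so $B_i$ is still infinite and $B_i^* = C_i^* = V_i$. The $B_i$ are now pairwise disjoint, and the leftover set $F = A \setmins \bigcup_{i \le n} B_i$ is finite. Redefining $B_1$ to be $B_1 \cup F$ (which does not change $B_1^*$ or the fact that it is infinite) produces a finite partition $\P = \{B_1, \dots, B_n\}$ of $A$ into infinite sets with $\V = \{B^* : B \in \P\}$, as required.
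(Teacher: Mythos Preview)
Your proof is correct. Note, however, that the paper states this lemma without proof, presumably regarding it as a standard folklore fact about $\w^*$ and Stone duality; so there is no ``paper's own proof'' to compare against. Your argument fills in exactly the details one would expect: compactness of $A^*$ forces $\V$ to be finite, Stone duality produces representatives $C_i \sub A$ that partition $A$ mod finite, and a routine disjointification (absorbing the leftover finite set into one piece) yields an honest partition into infinite sets with the same stars.
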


A function $f: \w \to \w$ is called a \emph{mod-finite permutation} of $\w$ if there are some co-finite $A,B \sub \w$ such that $f$ restricts to a bijection $A \to B$. Every mod-finite permutation of $\w$ induces a homeomorphism $f^*: \w^* \to \w^*$, which sends every ultrafilter $u$ to the unique ultrafilter generated by $\set{f[A]}{A \in u}$:
$$A \in u \ \Leftrightarrow \ f[A] \in f^*(u).$$
Equivalently, $f^*$ is the restriction to $\w^*$ of the unique map $\b\w \to \b\w$ that continuously extends $f: \w \to \w$. Members of $\autstar$ that arise from mod-finite permutations of $\w$ in this way are called \emph{trivial}.

\begin{lemma}\label{lem:ct}$\ $
The shift map $\s$ and its inverse $\sinverse$ are chain transitive. Up to isomorphism, these are the only two trivial maps in $\autstar$ that are chain transitive.
\end{lemma}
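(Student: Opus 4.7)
The plan is in two steps. First, to show $\s$ is chain transitive, I would apply the zero-dimensional criterion of Lemma~\ref{lem:ct0}: it suffices to check that $\s[A^*] \not\sub A^*$ whenever both $A$ and $\w \setmins A$ are infinite. Since $\s[A^*] = (A+1)^*$, this reduces to showing $(A+1) \setmins A$ is infinite. If not, there is some $N$ such that $n \in A \cap [N,\w)$ forces $n+1 \in A$, and iterating yields either $A \cap [N,\w) = \0$ (so $A$ is finite) or a cofinite tail of $\w$ contained in $A$ (so $A$ is cofinite), a contradiction either way. For $\sinverse$, chain transitivity follows from a short duality: for clopen $A \sub \w^*$, $h[A] \not\sub A$ is equivalent (by taking complements, using that $h$ is bijective) to $h[\w^* \setmins A] \not\supseteq \w^* \setmins A$, which is equivalent to $h^{-1}[\w^* \setmins A] \not\sub \w^* \setmins A$; hence the zero-dimensional criterion of Lemma~\ref{lem:ct0} is symmetric in $h$ and $h^{-1}$.

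For the converse---that every chain-transitive trivial map is isomorphic to $\s$ or $\sinverse$---the strategy is to pin down the mod-finite orbit structure. Let $f: A \to B$ be a mod-finite bijection with $A, B \sub \w$ cofinite, and suppose $f^*$ is chain transitive. The $f$-orbits partition $A \cup B$ (cofinite in $\w$), and each is of one of four possible types: a finite orbit (a cycle inside $A \cap B$, or a finite chain beginning in $A \setmins B$ and ending in $B \setmins A$); a two-sided infinite $\Z$-orbit; a one-sided $\w$-orbit starting at an element of $A \setmins B$; or a one-sided $(-\w)$-orbit ending at an element of $B \setmins A$. Since $A \setmins B$ and $B \setmins A$ are finite, only finitely many orbits can be of the last two types. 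Every union of orbits is $f$-invariant, so by chain transitivity via Lemma~\ref{lem:ct0}, no union of orbits can be both infinite and co-infinite in $\w$. This rules out, in order: infinitely many finite orbits (if their union is co-infinite, done; if cofinite, split the family into two subfamilies of infinite total, both $f$-invariant); infinitely many $\Z$-orbits (split similarly); two or more infinite orbits (any one is then infinite, co-infinite, and $f$-invariant); and the sole remaining infinite orbit being of $\Z$-type (its positive ray $\set{x_n}{n \geq 0}$ is infinite, co-infinite, and $f$-invariant). Hence, modulo finite, $f$ has a single cofinite orbit of type $\w$ or $(-\w)$.

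To finish, I would construct an explicit conjugating bijection. If the unique orbit has type $\w$, enumerate it as $y_0, y_1, y_2, \dots$ with $f(y_n) = y_{n+1}$ for all sufficiently large $n$, and let $F = \w \setmins \set{y_n}{n \in \w}$, a finite set. Choose any bijection of $\{0, 1, \dots, |F|-1\}$ onto $F$ and extend it to a bijection $g: \w \to \w$ by setting $g(|F|+n) = y_n$ for each $n \in \w$. Then $g(n+1) = f(g(n))$ for all sufficiently large $n$, so $g^* \circ \s = f^* \circ g^*$ in $\autstar$, giving $f^* \iso \s$. The $(-\w)$ case is dual and yields $f^* \iso \sinverse$. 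The main obstacle I anticipate is the orbit analysis in the middle step: the notion of ``orbit structure'' of a mod-finite permutation is only well-defined modulo finite modifications, and one must be careful to apply chain transitivity only to genuinely infinite, co-infinite $f$-invariant subsets, verifying that every type of orbit (in particular the possibly-many $\Z$-orbits) is correctly handled.
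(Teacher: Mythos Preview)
Your argument is correct and self-contained. The paper itself does not prove this lemma at all: it simply cites \cite[Section 5]{BrianPset} for both assertions. So your proposal is strictly more informative than the paper's own treatment.

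A few minor comments on the execution. Your chain-transitivity argument for $\s$ via Lemma~\ref{lem:ct0} is clean, and the duality observation that the clopen criterion is symmetric in $h$ and $h^{-1}$ is the right way to handle $\sinverse$. In the orbit analysis, note that step (2) of your case list (infinitely many $\Z$-orbits) is already subsumed by step (3) (two or more infinite orbits), so you can drop it. When you say ``every union of orbits is $f$-invariant,'' you should record explicitly that this passes to $f^*$: if $C$ is a union of orbits then $f[C \cap A] \sub C$, and since $A$ is cofinite this gives $f^*[C^*] = (f[C \cap A])^* \sub C^*$, which is exactly what Lemma~\ref{lem:ct0} needs. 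Finally, in the $\w$-orbit case you actually get $f(y_n) = y_{n+1}$ for \emph{all} $n \geq 0$ (not just sufficiently large $n$), since $y_0 \in A \setmins B$ forces every $y_n$ to lie in $A$; but the weaker statement you wrote is already enough for the conjugation at the level of $\w^*$.

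The obstacle you flag---that orbit structure of a mod-finite permutation is only well-defined modulo finite---is real but you have handled it correctly by fixing a specific representative bijection $f \colon A \to B$ and working with its honest orbits on $A \cup B$.
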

\begin{proof}
Both assertions are proved in \cite[Section 5]{BrianPset}.
\end{proof}

We now turn more directly toward the main theme of the paper, which is to show that the shift map and its inverse are topologically indistinguishable within $\autstar$. Henceforth, let 
\begin{align*}
\Iso(\s) &= \set{h \in \autstar}{h \text{ is isomorphic to }\s}, \\
\Iso(\s^{-1}) &= \set{h \in \autstar}{h \text{ is isomorphic to }\s^{-1}}.
\end{align*}

\begin{theorem}\label{thm:equiv}
Let $\V$ be a partition of $\w^*$ into clopen sets, and let $\mathsf{G}$ be a directed graph with vertex set $\V$. The following are equivalent:
\begin{enumerate}
\item $\mathsf{G}$ is transitive.
\item $\mathsf{G} = \mathsf{Hit}(\V,h)$ for some $h \in \Iso(\s)$.
\item $\mathsf{G} = \mathsf{Hit}(\V,h)$ for some $h \in \Iso(\s^{-1})$.
\end{enumerate}
\end{theorem}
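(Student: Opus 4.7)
The implications $(2) \Rightarrow (1)$ and $(3) \Rightarrow (1)$ come for free from the earlier lemmas: chain transitivity is a topological invariant, so by Lemma~\ref{lem:ct} every $h \in \Iso(\s) \cup \Iso(\sinverse)$ is chain transitive. Since $\V$ is a finite cover of $\w^*$ by non-empty clopen sets, Lemma~\ref{lem:hit} then gives that $\mathsf{Hit}(\V, h)$, and hence $\mathsf G$, is transitive.

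For $(1) \Rightarrow (2)$ the plan is to build a trivial self-homeomorphism of $\w^*$ with hitting graph exactly $\mathsf G$. By Lemma~\ref{lem:partition} write $\V = \set{A_i^*}{i < n}$ for some partition $\set{A_i}{i < n}$ of $\w$ into infinite sets. Since $\mathsf G$ is finite and transitive, I can construct a one-sided infinite walk $\seq{s_k}{k \in \w}$ in $\mathsf G$ that traverses every edge infinitely often: fix an enumeration $e_1, \dots, e_m$ of the edges of $\mathsf G$ and build the walk in passes, where each pass traverses $e_1, e_2, \dots, e_m$ in turn, inserting short bridging paths (supplied by transitivity) between consecutive edges. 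Every vertex of $\mathsf G$ then appears infinitely often in the walk, and because each $A_i$ is infinite I can fix a bijection $\pi: \w \to \w$ with $\pi(k) \in A_{s_k}$ for every $k$. Define $f: \w \to \w$ by $f(\pi(k)) = \pi(k+1)$ and set $h := f^* \in \autstar$. Then $h = \pi^* \circ \s \circ (\pi^*)^{-1}$, so $h \in \Iso(\s)$. For each edge $(i,j)$ of $\mathsf G$, the set $\set{\pi(k)}{s_k = i \text{ and } s_{k+1} = j}$ is an infinite subset of $A_i$ whose $f$-image lies in $A_j$, so $(A_i^*, A_j^*)$ is an edge of $\mathsf{Hit}(\V, h)$; for each non-edge $(i,j)$ of $\mathsf G$ no element of $A_i$ is mapped by $f$ into $A_j$ at all, so $(A_i^*, A_j^*)$ is not an edge of $\mathsf{Hit}(\V, h)$ either. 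Hence $\mathsf{Hit}(\V, h) = \mathsf G$.

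For $(1) \Rightarrow (3)$, observe that the opposite digraph $\mathsf G^{\mathrm{op}}$ (with every edge of $\mathsf G$ reversed) is also transitive. Applying $(1) \Rightarrow (2)$ to $\mathsf G^{\mathrm{op}}$ yields $h' \in \Iso(\s)$ with $\mathsf{Hit}(\V, h') = \mathsf G^{\mathrm{op}}$; then $h := (h')^{-1}$ lies in $\Iso(\sinverse)$ and satisfies $\mathsf{Hit}(\V, h) = \mathsf G$, because $h[A_i^*] \cap A_j^* \neq \0$ iff $A_i^* \cap h'[A_j^*] \neq \0$. I expect the only real work to be in $(1) \Rightarrow (2)$; the conceptual heart is the observation that a finite transitive digraph admits an infinite walk traversing each edge infinitely often, and that such a walk can be realized on $\w$ by a bijection $\pi$ conjugating the successor map into a trivial map whose hitting graph on $\V$ is exactly $\mathsf G$.
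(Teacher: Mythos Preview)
Your proposal is correct and follows essentially the same route as the paper's proof: the implications $(2),(3)\Rightarrow(1)$ via Lemmas~\ref{lem:hit} and~\ref{lem:ct}; the implication $(1)\Rightarrow(2)$ by lifting $\V$ to a partition $\P$ of $\w$, building an infinite walk through $\mathsf G$ that traverses every edge infinitely often, realizing that walk as a bijection $\w\to\w$ (your $\pi$, the paper's $f$) that conjugates the successor map to the desired $h$; and $(1)\Rightarrow(3)$ via the opposite digraph. The only cosmetic difference is that the paper gives an explicit greedy formula for the bijection, namely $f(n)=\min\bigl(V_n\setminus\{f(0),\dots,f(n-1)\}\bigr)$, whereas you simply assert that such a bijection exists because each vertex occurs infinitely often in the walk and each $A_i$ is infinite.
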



\begin{proof}
Both $(2)$ and $(3)$ imply $(1)$ by Lemmas \ref{lem:hit} and \ref{lem:ct}.

To show that $(1)$ implies $(2)$, let $\V$ be a partition of $\w^*$ into clopen sets. By Lemma~\ref{lem:partition}, we may write $\V = \set{A^*}{A \in \P}$, where $\P$ is a partition of $\w$ into finitely many infinite sets.

Because $\mathsf{G}$ is transitive, if $(A,B)$ and $(C,D)$ are edges in $\mathsf{G}$, then it is always possible to find a path from $B$ to $C$; i.e., a path connecting the end of the one edge to the beginning of the other. Using this fact, and the fact that the members of $\P$ are naturally identified with the vertices of $\mathsf{G}$, a simple recursive construction allows us to build an infinite sequence $\seq{V_n}{n < \w}$ of members of $\P$ such that $\seq{V_n^*}{n \in \w}$ is a path in $\mathsf{G}$ and 
\begin{itemize}
\item[$(\dagger)$] For every $A,B \in \P$ with $A^* \toG B^*$, there are infinitely many $n \in \w$ such that $V_n = A$ and $V_{n+1} = B$.
\end{itemize}
In other words, this path traverses every edge in $\mathsf{G}$ infinitely often. Observe that a transitive graph has no isolated vertices; thus in particular, $(\dagger)$ implies $\set{n \in \w}{V_n = A}$ is infinite for every $A \in \P$.

Next define a function $f: \w \to \w$ by setting
$$f(n) \,=\, \min (V_n \setmins \{f(0),f(1),\dots,f(n-1)\})$$
for all $n$. This function is well-defined because each $V_n$ is infinite. This function is clearly injective, and using the fact that $\set{n}{V_n = A}$ is infinite for every $A \in \P$, it is not hard to see that $f$ is also surjective. Thus $f$ is a bijection $\w \to \w$. 

Finally, let $h$ denote the function $f(n) \mapsto f(n+1)$.
Roughly, we may think of the bijection $f$ as a relabelling of the points of $\w$, and then think of $h$ as the (relabelled) successor map. 
We claim that $h^*$ is isomorphic to $\s$ and that $\mathsf{Hit}(\V,h^*) = \mathsf{G}$. 

First, note that $f$ and $h$ are both mod-finite permutations of $\w$, so that $f^*,h^* \in \autstar$. To see that $h^*$ is isomorphic to $\s$, let $s$ denote the successor function $n \mapsto n+1$ and note that $f \circ s = h \circ f$. This implies 
$f^* \circ s^* = h^* \circ f^*$.
But $s^* = \s$, so this shows that $f^*$ is an isomorphism from $\s$ to $h^*$.

For any two $A,B \in \P$ (not necessarily distinct), let 
$$E_{A \to B} = \set{n \in \w}{V_n = A \text{ and } V_{n+1} = B}.$$
If $A^* \toG B^*$, then $E_{A \to B}$ is infinite by property $(\dagger)$. On the other hand, if $\neg(A^* \toG B^*)$ then $E_{A \to B} = \0$, because the sequence $\seq{V_n}{n \in \w}$ has $V_n^* \toG V_{n+1}^*$ for every $n \in \w$. Thus
$$E_{A \to B} \text{ is infinite} \ \Leftrightarrow \ A^* \toG B^*.$$
By our definition of $f$, 
$E_{A \to B} = \set{f(m)}{f(m) \in A \text{ and } f(m+1) \in B}$.
Now observe that, by our definition of $h$, 
$$n \in A \text{ and } h(n) \in B \ \ \Leftrightarrow \ \ n \in E_{A \to B},$$
which implies $h[A] \cap B$ is infinite if and only if $E_{A \to B}$ is infinite. So
\begin{align*}
h^*[A^*] \cap B^* = (h[A] \cap B)^* \neq \0 \ &\Leftrightarrow \ h[A] \cap B \text{ is infinite} \\ 
&\Leftrightarrow \ E_{A \to B} \text{ is infinite} \ \Leftrightarrow \ A^* \toG B^*.
\end{align*}
As $A$ and $B$ were arbitrary members of $\P$ and $\V = \set{A^*}{A \in \P}$, this shows that $\mathsf{G} = \mathsf{Hit}(\V,h^*)$, which completes the proof that $(1)$ implies $(2)$.

One can prove that $(1)$ implies $(3)$ by a similar argument. Alternatively, one may deduce that $(1)$ implies $(3)$ from the now-proved fact that $(1)$ implies $(2)$ for all $\mathsf{G}$. To see this, fix some partition $\V$ of $\w^*$ into clopen sets, and let $\mathsf{G}$ be a transitive digraph with vertex set $\V$. Let $\mathsf{G}^{-1}$ denote the digraph obtained from $\mathsf{G}$ by inverting the edge relation. It is easy to check a digraph is transitive if and only if its inverse is; hence $\mathsf{G}^{-1}$ is transitive. Because $(1)$ implies $(2)$ for every digraph, this means there is some $h \in \Iso(\s)$ with $\mathsf{Hit}(\V,h) = \mathsf{G}^{-1}$. But then $h^{-1} \in \Iso(\s^{-1})$ and $\mathsf{Hit}(\V,h^{-1}) = \mathsf{G}$.
\end{proof}

\begin{corollary}\label{cor:dense}
If $U \sub \autstar$ is open, then $U \cap \Iso(\s) \neq \0$ if and only if $U \cap \Iso(\s^{-1}) \neq \0$. 
\end{corollary}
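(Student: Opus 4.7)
The plan is to exploit the very nice basis for $\autstar$ provided by Theorem~\ref{thm:basis} together with the equivalence in Theorem~\ref{thm:equiv}. The corollary should essentially be a direct consequence: once we know that the basic open sets of $\autstar$ are determined by a clopen partition $\V$ and a hitting digraph $\mathsf{G}$, and once we know that $\Iso(\s)$ and $\Iso(\s^{-1})$ meet exactly the same collection of such basic open sets (namely, those whose digraph is transitive), the result will follow immediately.

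More concretely, suppose $U \sub \autstar$ is open and $U \cap \Iso(\s) \neq \0$, and fix some $h \in U \cap \Iso(\s)$. By Theorem~\ref{thm:basis} there is a partition $\V$ of $\w^*$ into clopen sets and a digraph $\mathsf{G}$ with vertex set $\V$ such that
$$h \,\in\, \gen{\V,\mathsf{G}} \,\sub\, U.$$
From the definition of $\gen{\V,\mathsf{G}}$ we must have $\mathsf{G} = \mathsf{Hit}(\V,h)$. Since $h \in \Iso(\s)$, condition $(2)$ of Theorem~\ref{thm:equiv} is satisfied, so $(1)$ also holds: the digraph $\mathsf{G}$ is transitive.

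Now apply the implication $(1)\Rightarrow(3)$ of Theorem~\ref{thm:equiv} to produce some $h' \in \Iso(\s^{-1})$ with $\mathsf{Hit}(\V,h') = \mathsf{G}$. This $h'$ lies in $\gen{\V,\mathsf{G}} \sub U$, and so $U \cap \Iso(\s^{-1}) \neq \0$. The converse direction is entirely symmetric: starting from $h \in U \cap \Iso(\s^{-1})$, one chooses a basic open neighborhood $\gen{\V,\mathsf{G}} \sub U$ of $h$, invokes $(3)\Rightarrow(1)\Rightarrow(2)$ of Theorem~\ref{thm:equiv} to realize the same $\mathsf{G}$ as $\mathsf{Hit}(\V,h'')$ for some $h'' \in \Iso(\s)$, and concludes $U \cap \Iso(\s) \neq \0$.

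There is no serious obstacle here, since all the substantive work was done in Theorem~\ref{thm:equiv}; the only real content of the corollary is noting that the relevant basic neighborhoods of $\autstar$ are of the form $\gen{\V,\mathsf{G}}$ and are witnessed by their digraphs alone, so that any basic neighborhood meeting one of $\Iso(\s), \Iso(\s^{-1})$ must have a transitive digraph and hence meet the other as well.
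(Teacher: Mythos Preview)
Your proof is correct and follows essentially the same approach as the paper's own proof: pick a basic open neighborhood $\gen{\V,\mathsf{G}}$ of $h$ inside $U$, and then invoke Theorem~\ref{thm:equiv} to find a map in the other isomorphism class realizing the same digraph $\mathsf{G}$. The only cosmetic difference is that you explicitly route through condition~(1) (transitivity of $\mathsf{G}$), whereas the paper cites Theorem~\ref{thm:equiv} directly for the passage from $(2)$ to $(3)$.
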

\begin{proof}
Let $U \sub \autstar$ be open. Suppose $U \cap \Iso(\s) \neq \0$, and fix some ${h \in U \cap \Iso(\s)}$. By Theorem~\ref{thm:basis}, there is a partition $\V$ of $\w^*$ into clopen sets, and a digraph $\mathsf{G}$ with vertex set $\V$, such that $h \in \gen{\V,\mathsf{G}} \sub U$ or, equivalently, $\mathsf{Hit}(\V,h) = \mathsf{G}$. By the previous theorem, there is some ${g \in \Iso(\s^{-1})}$ such that $\mathsf{Hit}(\V,g) = \mathsf{G}$. But this means $g \in \gen{\V,\mathsf{G}} \sub U$, so $g$ witnesses the fact that $U \cap \Iso(\s^{-1}) \neq \0$. An essentially identical argument shows that if $U \cap \Iso(\s^{-1}) \neq \0$ then $U \cap \Iso(\s) \neq \0$.
\end{proof}

Notice that this corollary implies a weak version of the main theorem: $\Iso(\s)$ and $\Iso(\s^{-1})$ cannot be separated by an open subset of $\autstar$.

We end this section with a strengening of Corollary~\ref{cor:dense}. This extension is not necessary for understanding the proof of the main theorem in the next section (and thus may be skipped if desired). It is, however, further support for the informal idea behind the main theorem: that no simple topological property can distinguish $\s$ from $\s^{-1}$.

Recall that $\split$ denotes the \emph{splitting number}, the smallest cardinality of a family $\mathcal S$ of subsets of $\w$ such that, for every $A \sub \w$, there is some $D \in \mathcal S$ such that both $A \cap D$ and $A \setmins D$ are infinite. 

\begin{theorem}\label{thm:s}
Suppose $G$ is an intersection of $<\!\split$ open sets in $\autstar$. Then $G \cap \Iso(\s) \neq \0$ if and only if $G \cap \Iso(\sinverse) \neq \0$. 
\end{theorem}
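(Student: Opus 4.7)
The plan is to generalize Corollary~\ref{cor:dense}, handling $<\!\split$-many open constraints simultaneously. Let $h \in G \cap \Iso(\s)$; I aim to produce $g \in G \cap \Iso(\sinverse)$. By Theorem~\ref{thm:basis}, for each $\alpha < \kappa$ there is a basic open neighborhood $\gen{\V_\alpha, \mathsf{G}_\alpha} \sub U_\alpha$ of $h$, with $\V_\alpha = \set{A^*}{A \in \P_\alpha}$ for a finite partition $\P_\alpha$ of $\w$ into infinite sets (Lemma~\ref{lem:partition}) and $\mathsf{G}_\alpha = \mathsf{Hit}(\V_\alpha,h)$ transitive (Lemmas~\ref{lem:hit} and \ref{lem:ct}). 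The goal reduces to exhibiting $g \in \Iso(\sinverse)$ with $\mathsf{Hit}(\V_\alpha, g) = \mathsf{G}_\alpha$ for every $\alpha$.

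The crucial use of $\split$ is this. Enlarge $\F := \bigcup_\alpha \P_\alpha$ by adjoining, for each $\alpha$ and each edge $(A,B) \in \mathsf{G}_\alpha$, a set $D \sub \w$ whose associated clopen set equals $A^* \cap (h^*)^{-1}[B^*]$; this leaves $|\F| < \split$. Since $\F$ is not a splitting family, I recursively build a partition $\w = \bigsqcup_{n \in \w} X_n$ into countably many infinite pieces with each $X_n$ unsplittable by $\F$, and further arrange that for each edge witness $D$ some $X_n$ is mod-finite contained in $D$. For each $n, \alpha$ this yields a unique type $t_n(\alpha) \in \P_\alpha$ with $X_n \sub^* t_n(\alpha)$, and the enrichment guarantees that for every edge $(A, B) \in \mathsf{G}_\alpha$ there is a compatible pair of indices $(n, n')$ whose types jointly respect each $\mathsf{G}_\beta$.

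With this combinatorial skeleton, I construct a mod-finite bijection $f \colon \w \to \w$ by choosing an infinite itinerary $(n(m))_{m \in \w}$ through the indices and setting $f(m)$ to be the least element of $X_{n(m)}$ not in the range of $f \rest m$. Requiring each $n$ to appear infinitely often yields a bijection, and then $g := f^* \circ \sinverse \circ (f^*)^{-1}$ lies automatically in $\Iso(\sinverse)$. The requirement $\mathsf{Hit}(\V_\alpha, g) = \mathsf{G}_\alpha$ translates into the combinatorial condition that the type-transition sequence $(t_{n(m)}(\alpha))_{m \in \w}$ traces each edge of $\mathsf{G}_\alpha^{-1}$ infinitely often as a consecutive pair and uses no non-edge except finitely often.

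The main obstacle is orchestrating this single $\w$-long itinerary so that all $\kappa$ patterns come out right simultaneously. The transitivity (hence strong connectivity) of each $\mathsf{G}_\alpha^{-1}$, together with the compatible index pairs guaranteed by the enriched $\F$, make the walk feasible; the joint realizability of the $\mathsf{G}_\alpha$'s through the single map $h$ is what ultimately yields these compatible pairs. A secondary obstacle is that the finite exception sets $X_n \setmins t_n(\alpha)$ may accumulate across $\alpha$; however, the ``least unused'' rule enumerates each $X_n$ in its natural order, so for any single $\alpha$ the exceptions are exhausted within finitely many visits to each $X_n$ and contribute only finite noise to each hit pattern. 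Once $f$ is built, verifying $g \in G \cap \Iso(\sinverse)$ is routine. The reverse implication ($G \cap \Iso(\sinverse) \neq \0 \Rightarrow G \cap \Iso(\s) \neq \0$) follows by a symmetric argument with $\s$ and $\sinverse$ interchanged.
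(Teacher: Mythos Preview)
Your approach differs substantially from the paper's, and the central step---building a single $\w$-long itinerary that simultaneously realizes all $\kappa$ hit-patterns---is not adequately justified. You assert that ``the joint realizability of the $\mathsf{G}_\alpha$'s through the single map $h$ is what ultimately yields these compatible pairs,'' but this is precisely the hard part. Having $h \in \bigcap_\alpha \gen{\V_\alpha, \mathsf{G}_\alpha}$ tells you that for each $\alpha$ and each edge $(A,B)$ of $\mathsf{G}_\alpha$ the set $h[A^*] \cap B^*$ is nonempty; it does \emph{not} produce, for a given pair of indices $(n, n')$, any guarantee that $(t_n(\alpha), t_{n'}(\alpha))$ is an edge of $\mathsf{G}_\alpha^{-1}$ for every $\alpha$ at once, nor does it show that the resulting ``meta-graph'' on indices is connected. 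Your enrichment of $\F$ by edge-witness sets gives, for each individual edge in each $\mathsf{G}_\alpha$, some $X_n$ sitting inside that witness---but stringing these into a walk that is simultaneously valid in all $\kappa$ coordinates, and traverses every edge of every $\mathsf{G}_\alpha^{-1}$ infinitely often, is a genuine combinatorial problem that your sketch does not solve. (In effect you are asking for a kind of strong connectivity in a product of $\kappa$ directed graphs, which can fail even when each factor is strongly connected.)

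The paper sidesteps all of this with a much shorter and more concrete argument. First it reduces, via conjugation in the topological group $\autstar$, to the special case $\s \in G$. It then writes $G$ as an intersection of $\kappa$ \emph{subbasic} sets $\tr{A_\alpha^*, B_\alpha^*}$ rather than basic sets $\gen{\V_\alpha, \mathsf{G}_\alpha}$. Since $\kappa < \split$, the family $\set{A_\alpha}{\alpha < \kappa}$ is not splitting, so there is a \emph{single} infinite $D \sub \w$ such that each $A_\alpha$ either almost contains $D$ or is almost disjoint from it. The paper then writes down an explicit ``interval-flipping'' permutation $h_D$ of $\w$ built from the increasing enumeration of $D$, and checks directly that $f^* = h_D^* \circ \sinverse \circ (h_D^*)^{-1} \in \Iso(\sinverse)$ satisfies $f^*[A_\alpha^*] = \s[A_\alpha^*] = B_\alpha^*$ for every $\alpha$: essentially, $f$ agrees with the successor map off $D$, and $D$ interacts trivially with each $A_\alpha$. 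No itinerary, no walk, no simultaneous graph-traversal problem.
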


\begin{proof}
Suppose $G$ is an intersection of $\k < \split$ open subsets of $\autstar$, and suppose $\s \in G$. (We consider later the possibility that $G \cap \Iso(\s) \neq \0$ but $\s \notin G$.) By shrinking each open set to a finite intersection of subbasic open neighborhoods of $\s$, we may (and do) assume that $G$ is an intersection of $\k$ subbasic open sets: that is, $\s \in G = \bigcap_{\a \in \k}\tr{A_\a^*,B_\a^*}$.

Because $\k < \split$, there is some $D \sub \w$ such that for every $\a \in \k$, 
either $D \cap A_\a$ or $D \setminus A_\a$ is finite.

We now define a permutation $h: \w \to \w$ by ``flipping'' some intervals associated to $D$. Specifically, let $d_0=0$ and let $\{d_1,d_2,d_3,\dots\}$ be an increasing enumeration of the set $D+1 = \set{i+1}{i \in D}$. Define $h: \w \to \w$ by setting
$$h(n) \,=\,
(d_{k+1}-1)-i \ \ \ \text{ whenever } \ \ \ n = d_k+i.
$$

\vspace{1mm}
\begin{center}
\begin{tikzpicture}[xscale=.47,yscale=.47]

\draw[fill=black] (-5,0) circle (2pt);
\draw[fill=black] (-4,0) circle (2pt);
\draw[fill=black] (-3,0) circle (2pt);
\draw[fill=black] (-2,0) circle (2pt);
\draw[fill=black] (-1,0) circle (2pt);
\draw[fill=black] (0,0) circle (2pt);
\draw[fill=black] (1,0) circle (2pt);
\draw[fill=black] (2,0) circle (2pt);
\draw[fill=black] (3,0) circle (2pt);
\draw[fill=black] (4,0) circle (2pt);
\draw[fill=black] (5,0) circle (2pt);
\draw[fill=black] (6,0) circle (2pt);
\draw[fill=black] (7,0) circle (2pt);
\draw[fill=black] (8,0) circle (2pt);
\draw[fill=black] (9,0) circle (2pt);
\draw[fill=black] (10,0) circle (2pt);
\draw[fill=black] (11,0) circle (2pt);
\draw[fill=black] (12,0) circle (2pt);
\draw[fill=black] (13,0) circle (2pt);
\draw[fill=black] (14,0) circle (2pt);
\draw[fill=black] (15,0) circle (2pt);
\draw[fill=black] (16,0) circle (2pt);
\draw[fill=black] (17,0) circle (2pt);
\draw[fill=black] (18,0) circle (2pt);
\draw[fill=black] (19,0) circle (2pt);

\draw (20.5,0) node {$\dots$};

\draw (-5,-.5) node {\scriptsize$d_0$};
\draw (-2,-.5) node {\scriptsize$d_1$};
\draw (2,-.5) node {\scriptsize$d_2$};
\draw (7,-.5) node {\scriptsize$d_3$};
\draw (11,-.5) node {\scriptsize$d_4$};
\draw (18,-.5) node {\scriptsize$d_5$};

\draw[<->] (-4.9,.1) .. controls (-4.4,.8) and (-3.6,.8) .. (-3.1,.1);
\draw[->] (-4.1,.1) .. controls (-4.4,.6) and (-3.6,.6) .. (-3.9,.1);

\draw[<->] (-1.9,.1) .. controls (-1.2,.9) and (.2,.9) .. (.9,.1);
\draw[<->] (-.93,.1) .. controls (-.75,.6) and (-.25,.6) .. (-.07,.1);

\draw[<->] (2.1,.1) .. controls (3.2,1.2) and (4.8,1.2) .. (5.9,.1);
\draw[<->] (3.1,.1) .. controls (3.6,.8) and (4.4,.8) .. (4.9,.1);
\draw[->] (3.9,.1) .. controls (3.6,.6) and (4.4,.6) .. (4.1,.1);

\draw[<->] (7.1,.1) .. controls (7.8,.9) and (9.2,.9) .. (9.9,.1);
\draw[<->] (8.07,.1) .. controls (8.25,.6) and (8.75,.6) .. (8.93,.1);

\draw[<->] (11.1,.1) .. controls (12.7,1.6) and (15.3,1.6) .. (16.9,.1);
\draw[<->] (12.1,.1) .. controls (13.2,1.2) and (14.8,1.2) .. (15.9,.1);
\draw[<->] (13.1,.1) .. controls (13.6,.8) and (14.4,.8) .. (14.9,.1);
\draw[->] (13.9,.1) .. controls (13.6,.6) and (14.4,.6) .. (14.1,.1);

\draw[->] (20,1.3) .. controls (19.5,1.15) and (18.5,.7) .. (18.1,.1);
\draw[->] (20,.9) .. controls (19.7,.7) and (19.35,.5) .. (19.1,.1);

\end{tikzpicture}
\end{center}
\vspace{1mm}

Now consider the map $f = h \circ s^{-1} \circ h^{-1}$, where $s$ denotes the successor function $s(n) = n+1$. This function is not defined at $d_1-1$ (because $h(d_1-1) = 0$), but is defined on the rest of $\w$. We have
$$f(n) \,=\,
\begin{cases}
n+1 \ & \text{ if } n \notin D,\\
d_{i-2} & \text{ if } n = d_i-1 \text{ for some } i \geq 2.
\end{cases}
$$

\vspace{1mm}
\begin{center}
\begin{tikzpicture}[xscale=.47,yscale=.47]

\draw[fill=black] (-5,0) circle (2pt);
\draw[fill=black] (-4,0) circle (2pt);
\draw[fill=black] (-3,0) circle (2pt);
\draw[fill=black] (-2,0) circle (2pt);
\draw[fill=black] (-1,0) circle (2pt);
\draw[fill=black] (0,0) circle (2pt);
\draw[fill=black] (1,0) circle (2pt);
\draw[fill=black] (2,0) circle (2pt);
\draw[fill=black] (3,0) circle (2pt);
\draw[fill=black] (4,0) circle (2pt);
\draw[fill=black] (5,0) circle (2pt);
\draw[fill=black] (6,0) circle (2pt);
\draw[fill=black] (7,0) circle (2pt);
\draw[fill=black] (8,0) circle (2pt);
\draw[fill=black] (9,0) circle (2pt);
\draw[fill=black] (10,0) circle (2pt);
\draw[fill=black] (11,0) circle (2pt);
\draw[fill=black] (12,0) circle (2pt);
\draw[fill=black] (13,0) circle (2pt);
\draw[fill=black] (14,0) circle (2pt);
\draw[fill=black] (15,0) circle (2pt);
\draw[fill=black] (16,0) circle (2pt);
\draw[fill=black] (17,0) circle (2pt);
\draw[fill=black] (18,0) circle (2pt);
\draw[fill=black] (19,0) circle (2pt);

\draw (20.5,0) node {$\dots$};

\draw (-5,-.5) node {\scriptsize$d_0$};
\draw (-2,-.5) node {\scriptsize$d_1$};
\draw (2,-.5) node {\scriptsize$d_2$};
\draw (7,-.5) node {\scriptsize$d_3$};
\draw (11,-.5) node {\scriptsize$d_4$};
\draw (18,-.5) node {\scriptsize$d_5$};

\draw[<-] (-4.15,0)--(-4.85,0);
\draw[<-] (-3.15,0)--(-3.85,0);
\draw[<-] (-1.15,0)--(-1.85,0);
\draw[<-] (-.15,0)--(-.85,0);
\draw[<-] (.85,0)--(.15,0);
\draw[<-] (2.85,0)--(2.15,0);
\draw[<-] (3.85,0)--(3.15,0);
\draw[<-] (4.85,0)--(4.15,0);
\draw[<-] (5.85,0)--(5.15,0);
\draw[<-] (7.85,0)--(7.15,0);
\draw[<-] (8.85,0)--(8.15,0);
\draw[<-] (9.85,0)--(9.15,0);
\draw[<-] (11.85,0)--(11.15,0);
\draw[<-] (12.85,0)--(12.15,0);
\draw[<-] (13.85,0)--(13.15,0);
\draw[<-] (14.85,0)--(14.15,0);
\draw[<-] (15.85,0)--(15.15,0);
\draw[<-] (16.85,0)--(16.15,0);
\draw[<-] (18.85,0)--(18.15,0);
\draw[<-] (19.65,0)--(19.15,0);

\draw[->] (20,2.2) .. controls (18,2.2) and (13,2) .. (11.1,.1);
\draw[->] (20,1.3) .. controls (19.4,1.15) and (18.5,.7) .. (18.1,.1);
\draw[<-] (-4.9,.1) .. controls (-3.3,1.4) and (-.7,1.4) .. (.9,.1);
\draw[<-] (-1.9,.1) .. controls (-.2,2) and (4.2,2) .. (5.9,.1);
\draw[<-] (2.1,.1) .. controls (3.8,2) and (8.2,2) .. (9.9,.1);
\draw[<-] (7.1,.1) .. controls (9.3,2.4) and (14.7,2.4) .. (16.9,.1);

\end{tikzpicture}
\end{center}
\vspace{1mm}

Observe that $h$ and $f$ are mod-finite permutations of $\w$, so $h^*,f^* \in \autstar$. Furthermore, $h^*$ is an isomorphism from $\s^{-1}$ to $f^*$ because $f = h \circ s^{-1} \circ h^{-1}$ implies $f^* = (h \circ s^{-1} \circ h^{-1})^* = h^* \circ \s^{-1} \circ (h^*)^{-1}$. A little less precisely (but perhaps more clearly), $f^*$ is isomorphic to $\s^{-1}$ because one can transform $f$ into $s^{-1}$ by relabelling the points of $\w$ (which is clear from the picture).

Thus $f^* \in \Iso(\s^{-1})$, and we claim that $f^* \in G$. Let $\a \in \k$. By our choice of $D$, either $D \cap A_\a$ or $D \setminus A_\a$ is finite. If $D \cap A_\a$ is finite, then $f(m) = m+1$ for all but finitely many $m \in A_\a$, which means 
$$f^*[A_\a^*] = (A_\a+1)^* = \s[A_\a^*] = B_\a^*.$$
If on the other hand $D \setmins A_\a$ is finite, then for all $m \in A_\a$ (except possibly $m = d_1-1$), either $m \notin D$, in which case $f(m) = m+1$, or else $m \in D$, and then $m = d_i-1$ for some $i \geq 2$ and $f(m) = d_{i-2} \in D+1$. 
Thus 
\begin{align*}
f^*[A_\a^*] &= f^*[(A_\a \setmins D)^* \cup D^*] = f^*[(A_\a \setmins D)^*] \cup f[D^*] \\
&= ((A_\a \setmins D)+1)^* \cup (D+1)^* = (A_\a+1)^* = \s[A_\a^*] = B_\a^*.
\end{align*}
Thus in either case, $f^*(A_\a^*) = \s(A_\a^*) = B_\a^*$, which implies $f^* \in \tr{A_\a^*,B_\a^*}$. As this holds for all $\a \in \k$, we have $f^* \in G$ as claimed.

The preceding argument shows that if $\s \in G$, where $G$ is some $<\!\split$-sized intersection of open subsets of $\autstar$, then $G \cap \Iso(\s^{-1}) \neq \0$. Next suppose $g \in \Iso(\s) \cap G$, but (possibly) $g \neq \s$. Fix an isomorphism between $\s$ and $g$; that is, fix some $h \in \autstar$ such that $h \circ g = \s \circ h$, or equivalently, $g = h^{-1} \circ \s \circ h$. Let 
$$hGh^{-1} = \set{e \in \autstar}{h^{-1} \circ e \circ h \in G}$$
and observe that $\s \in hGh^{-1}$ because $g \in G$. 

By Proposition~\ref{prop:group}, the map $e \mapsto h^{-1} \circ e \circ h$ is a self-homeomorphism of $\autstar$. In particular, $hGh^{-1} = \bigcap_{\a < \k}\tr{hA_\a h^{-1},hB_\a h^{-1}}$ is a countable intersection of open sets in $\autstar$. As $\s \in hGh^{-1}$, the argument above allows us to conclude that there is some $f \in \Iso(\s^{-1}) \cap hGh^{-1}$. By definition, $f \in hGh^{-1}$ implies $h^{-1} \circ f \circ h \in G$. But $h^{-1} \circ f \circ h$ is isomorphic to $f$ (indeed, $h$ is a witnessing isomorphism), and $f$ is isomorphic to $\s^{-1}$; thus $h^{-1} \circ f \circ h$ is isomorphic to $\s^{-1}$. Hence $h^{-1} \circ f \circ h \in \Iso(\s^{-1}) \cap G \neq \0$.

This shows that if $G \cap \Iso(\s) \neq \0$ then $G \cap \Iso(\s^{-1}) \neq \0$, thus proving the ``only if'' direction of the lemma. To prove the ``if'' direction, one may either $(1)$ note that the argument above is easily modified, by swapping the roles of $\s$ and $\sinverse$, to obtain a proof of this direction, or $(2)$ note that one may deduce the ``if'' direction from the ``only if'' direction directly, using the fact that the inversion map is a self-homeomorphism of $\autstar$ by Proposition~\ref{prop:group}.
\end{proof}

\section{A proof of the main theorem}\label{sec:main}


\begin{lemma}\label{lem:baire}
$\Iso(\s)$ is a Baire space. That is, if $\set{U_n}{n \in \w}$ is a collection of dense open subsets of $\Iso(\s)$, then $\bigcap_{n < \w}U_n$ is dense in $\Iso(\s)$. 
\end{lemma}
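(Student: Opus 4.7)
The plan follows a standard Baire-category argument. Given dense open sets $U_0, U_1, \dots$ in $\Iso(\s)$ and any non-empty basic open $V_0 \sub \Iso(\s)$, I recursively build a descending chain $V_0 \supseteq V_1 \supseteq \dots$ of basic open sets with $V_n \sub U_{n-1}$ for $n \geq 1$, and then construct an $h \in \bigcap_n V_n \cap \Iso(\s)$. By Theorems~\ref{thm:basis} and~\ref{thm:equiv}, $V_n = \gen{\V_n, \mathsf{G}_n} \cap \Iso(\s)$ for a clopen partition $\V_n$ of $\w^*$ and a transitive digraph $\mathsf{G}_n$ on $\V_n$. At stage $n \geq 1$, pick $h_n \in V_{n-1} \cap U_{n-1}$; by Proposition~\ref{prop:refine}, choose a refinement $\V_n$ of $\V_{n-1}$ such that $V_n := \gen{\V_n, \mathsf{Hit}(\V_n, h_n)} \cap \Iso(\s) \sub V_{n-1} \cap U_{n-1}$, and set $\mathsf{G}_n := \mathsf{Hit}(\V_n, h_n)$ (transitive by Lemmas~\ref{lem:hit} and~\ref{lem:ct}). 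By Lemma~\ref{lem:partition}, $\V_n = \set{A^*}{A \in \P_n}$ for a finite partition $\P_n$ of $\w$ into infinite sets, with $\P_{n+1}$ refining $\P_n$; a key compatibility falls out, namely, for $A, B \in \P_n$, $A \toG B$ in $\mathsf{G}_n$ iff there exist $A' \sub A$, $B' \sub B$ in $\P_{n+1}$ with $A' \toG B'$ in $\mathsf{G}_{n+1}$.

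I produce $h^* \in \bigcap_n V_n$ by building a bijection $f: \w \to \w$ in stages and setting $h^* := f^* \circ \s \circ (f^*)^{-1} \in \Iso(\s)$. The goal is $\mathsf{Hit}(\V_n, h^*) = \mathsf{G}_n$ for every $n$, that is, for each $n$ and each pair $A, B \in \P_n$, $\set{k}{f(k) \in A \text{ and } f(k+1) \in B}$ is infinite iff $A \toG B$ in $\mathsf{G}_n$. Adapting the construction from the proof of Theorem~\ref{thm:equiv}: at stage $n$, extend the finite initial segment of $f$ already built by tracing a path in $\mathsf{G}_n$ starting at the cell of $\P_n$ containing the previous value of $f$ and traversing each edge of $\mathsf{G}_n$ at least $n$ times (possible since $\mathsf{G}_n$ is transitive), then define the new values of $f$ to be the smallest unused elements of the cells along the path, inserting the integer $n$ into the image at an appropriate visit to its cell if it has not yet appeared (for surjectivity, via a back-and-forth step).

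Every transition added at stage $n$ is an edge of $\mathsf{G}_n$ by construction, hence by the compatibility above also an edge of $\mathsf{G}_m$ for every $m \leq n$; consequently at each fixed level $m$, only the finitely many transitions from stages $< m$ can fail to be edges of $\mathsf{G}_m$. Each edge of $\mathsf{G}_m$ is also traversed infinitely often: at least $m$ times at stage $m$ directly, and at every later stage $n > m$ the edge receives further traversals as the projection to $\P_m$ of edges of $\mathsf{G}_n$ appearing in the stage-$n$ path. Together this yields $\mathsf{Hit}(\V_n, h^*) = \mathsf{G}_n$ for all $n$, so $h^* \in \bigcap_n V_n \cap \Iso(\s)$, as required. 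The main technical obstacle is coordinating the single map $f$ with the demands at all levels simultaneously when only finitely many values can be committed per stage; this is resolved by the two structural features accumulated in the first paragraph, the transitivity of each $\mathsf{G}_n$ (so every required path exists within a single stage) and the compatibility between successive refinements (so a correct choice at a fine level automatically respects all coarser ones, making dovetailing possible).
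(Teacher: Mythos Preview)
Your proposal is correct and takes essentially the same approach as the paper: recursively nest basic open sets $\gen{\V_n,\mathsf{G}_n}\cap\Iso(\s)$ with transitive $\mathsf{G}_n$ on refining partitions $\P_n$, record the level-to-level edge compatibility, and then build a bijection $f:\w\to\w$ by concatenating edge-exhausting finite paths through the successive $\mathsf{G}_n$ so that $h^*=f^*\circ\s\circ(f^*)^{-1}\in\Iso(\s)$ lands in every $\gen{\V_n,\mathsf{G}_n}$. The only cosmetic differences are that the paper defines $f(m)$ as the least unused element of the $m$th cell (obtaining surjectivity from this ``min'' rule rather than your back-and-forth insertion of $n$) and handles the inter-stage glue via a condition at level $n{-}1$ rather than starting the stage-$n$ path at the $\P_n$-cell of the last committed value.
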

\begin{proof}
Let $\set{U_n}{n \in \w}$ be a collection of dense open subsets of $\Iso(\s)$, and let $V$ be a nonempty open subset of $\Iso(\s)$. To prove the lemma, we must show $V \cap \bigcap_{n \in \w}U_n \neq \0$.

By Theorem~\ref{thm:basis}, the sets of the form $\gen{\V,\mathsf{G}}$ constitute a basis for $\autstar$. It follows that the sets of the form $\gen{\V,\mathsf{G}} \cap \Iso(\s)$ constitute a basis for $\Iso(\s)$. For convenience, we will denote $\gen{\V,\mathsf{G}} \cap \Iso(\s)$ by $\gen{\V,\mathsf{G}}_\s$ for the remainder of the proof.


To begin, we define by recursion a sequence $\seq{\gen{\V_n,\mathsf{G}_n}_\s}{n \in \w}$ of basic open subsets of $\Iso(\s)$, along with a sequence $\seq{\P_n}{n \in \w}$ of partitions of $\w$, satisfying the following five properties for all $n \in \w$: 

\begin{minipage}{\textwidth}
\begin{enumerate}
\item $\P_n$ partitions $\w$ into finitely many infinite sets.
\item $\V_n = \set{A^*}{A \in \P_n}$.
\item $\0 \neq \gen{\V_n,\mathsf{G}_n}_\s \sub V \cap U_0 \cap \dots \cap U_n$.
\item if $m < n$, then $\gen{\V_n,\mathsf{G}_n}_\s \sub \gen{\V_m,\mathsf{G}_m}_\s$.
\item if $m < n$, then $\P _n$ is a refinement of $\P_m$.
\end{enumerate}
\end{minipage}

To begin the recursion, note that because $U_0 \cap V$ is a nonempty open subset of $\Iso(\s)$, Theorem~\ref{thm:basis} implies there is some partition $\V_0$ of $\w^*$ into clopen sets, and some directed graph $\mathsf{G}_0$ with vertex set $\V_0$, such that 
$$\gen{\V_0,\mathsf{G}_0}_\s \neq \0 \ \ \text{ and } \ \ \gen{\V_0,\mathsf{G}_0}_\s \sub V \cap U_0.$$
By Lemma~\ref{lem:partition}, we may write $\V_0 = \set{A^*}{A \in \P_0}$, where $\P_0$ is some partition of $\w$ into finitely many infinite sets. This concludes the base step of the recursion: properties $(1)$ through $(3)$ are satisfied for $n=0$, and properties $(4)$ and $(5)$ are vacuous for $n=0$.

After stage $\ell-1$ of the recursion, we have a sequence $\seq{\gen{\V_m,\mathsf{G}_m}_\s}{m < \ell}$ of basic open subsets of $\Iso(\s)$, and a sequence $\seq{\P_m}{m < \ell}$ of partitions of $\w$, such that properties $(1)$ through $(5)$ above are satisfied for ${n = \ell-1}$. 
We must construct $\V_\ell$, $\mathsf{G}_\ell$, and $\P_\ell$. Note that $\gen{\V_{\ell-1},\mathsf{G}_{\ell-1}}_\s$ is a nonempty open subset of $\Iso(\s)$. Because $U_\ell$ is a dense open subset of $\Iso(\s)$, this implies $\gen{\V_{\ell-1},\mathsf{G}_{\ell-1}}_\s \cap U_\ell$ is a nonempty open subset of $\Iso(\s)$; hence it contains a nonempty basic open set. That is, 
we may choose some partition $\V_\ell^0$ of $\w^*$ into clopen sets, and some digraph $\mathsf{G}_\ell^0$ with vertex set $\V^0_\ell$, such that
$$\0 \,\neq\, \gen{\V_\ell^0,\mathsf{G}^0_\ell}_\s \,\sub\, \gen{\V_{\ell-1},\mathsf{G}_{\ell-1}}_\s \cap U_\ell.$$

For each $A \in \P_{\ell-1}$, let
$\W_A = \set{B \cap A^*}{B \in \V_\ell^0 \text { and } B \cap A^* \neq \0}$
and observe that $\W_A$ is a partition of $A^*$ into (nonempty) clopen sets. By Lemma~\ref{lem:partition}, there is a partition $\C_A$ of $A$ into finitely many infinite sets such that $\W_A = \set{C^*}{C \in \C_A}$. 
Let 
$$\textstyle \P_\ell = \bigcup_{A \in \P_{\ell-1}}\C_A \ \ \text{ and } \ \ \V_\ell = \set{A^*}{A \in \P_\ell}.$$

This definition of $\V_\ell$ makes it obvious that hypothesis $(2)$ is satisfied for $n=\ell$. Also, because $\P_\ell$ was obtained by taking each of the finitely many infinite sets in $\P_{\ell-1}$ and further partitioning each of them into finitely many infinite sets, it is clear that hypothesis $(1)$ is satisfied for $n=\ell$, and that hypothesis $(5)$ is satisfied for $n = \ell$ (using the fact that hypothesis $(5)$ is satisfied for $n = \ell-1$).

To define the digraph $\mathsf{G}_\ell$, first recall $\gen{\V_\ell^0,\mathsf{G}^0_\ell}_\s \neq \0$. Fix some (any) $h \in \gen{\V_\ell^0,\mathsf{G}^0_\ell}_\s$ and let $\mathsf{G}_\ell = \mathsf{Hit}(\V_\ell,h)$. Note that $h \in \gen{\V_\ell,\mathsf{G}_\ell}_\s$, so in particular $\gen{\V_\ell,\mathsf{G}_\ell}_\s \neq \0$. Furthermore, because $\V_\ell$ refines $\V_\ell^0$, 
\begin{align*}
\gen{\V_\ell,\mathsf{G}_\ell}_\s &= \gen{\V_\ell,\mathsf{Hit}(\V_\ell,h)}_\s \sub \gen{\V_\ell^0,\mathsf{Hit}(\V_\ell^0,h)}_\s = \gen{\V_\ell^0,\mathsf{G}^0_\ell}_\s
\end{align*} 
by Proposition~\ref{prop:refine}. Thus
$$\gen{\V_\ell,\mathsf{G}_\ell}_\s \sub \gen{\V_\ell^0,\mathsf{G}^0_\ell}_\s \sub \gen{\V_{\ell-1},\mathsf{G}_{\ell-1}} \cap U_\ell$$
Using this containment, it is clear that the inductive hypotheses $(2)$ and $(3)$ for $n=\ell$ follow from the inductive hypotheses $(2)$ and $(3)$ for $n=\ell-1$.

This completes the recursive construction:
thus we obtain a sequence $\seq{\gen{\V_n,\mathsf{G}_n}_\s}{n \in \w}$ of basic open subsets of $\Iso(\s)$, together with a sequence $\seq{\P_n}{n \in \w}$ of partitions of $\w$ such that properties $(1)$ through $(5)$ listed above are satisfied for every $n \in \w$.

In the next part of the proof, we construct a function $f: \w \to \w$. The idea is that ultimately we will let $h$ denote the map $f(n) \mapsto f(n+1)$ as in the proof of Theorem~\ref{thm:equiv}, and we will then complete the proof of the lemma by showing $h^* \in \bigcap_{n \in \w}\gen{\V_n,\mathsf{G}_n}_\s \sub V \cap \bigcap_{n \in \w}U_n$. The construction of the map $f$ here is similar to the construction of the map $f$ in the proof of Theorem~\ref{thm:equiv}, but with one extra complication: instead of representing a path through a single digraph $\mathsf{G}$ (like the function in the proof of Theorem~\ref{thm:equiv}), the function $f$ constructed here ``diagonalizes'' across an infinite sequence of digraphs $\seq{\mathsf{G}_n}{n \in \w}$.

Before defining $f$, we first define an infinite sequence of paths, one through each of the graphs $\mathsf{G}_n$. For each $n$, let $\tox{\mathsf{G}_n}$ denote the edge relation for $\mathsf{G}_n$.

To begin, consider the digraph $\mathsf{G}_0$, which is transitive by Theorem~\ref{thm:equiv}. 
Because $\mathsf{G}_0$ is transitive, if $(A,B)$ and $(C,D)$ are edges in $\mathsf{G}_0$, then it is always possible to find a path from $B$ to $C$; i.e., a path connecting the end of the one edge to the beginning of the other. Using this fact, and the fact that the members of $\P_0$ are naturally identified with the vertices of $\mathsf{G}_0$, a simple recursive construction allows us to build a finite sequence $\seq{V_i^0}{i \leq K_0}$ of members of $\P_0$ such that 
\begin{itemize}
\item[$(\mathbf A_0)$] $\seq{(V_i^0)^*}{i \leq K_0}$ is a path in $\mathsf{G}_0$. 
\item[$(\mathbf B_0)$] For every $A,B \in \P_0$ with $A^* \tox{\mathsf{G}_0} B^*$, there is at least one $i < K_0$ such that $V_i^0 = A$ and $V_{i+1}^0 = B$.
\end{itemize}
In other words, this path traverses every edge in $\mathsf{G}_0$ at least once. Observe that a transitive graph has no isolated vertices; thus in particular, $(\mathbf B_0)$ implies $\set{i < K_0}{V_i^0 = A} \neq \0$ for every $A \in \P_0$.

After stage $n-1$ of the recursion, suppose we have constructed for every $m < n$ some finite sequence $\seq{V_i^m}{i \leq K_m}$ of members of $\P_m$ such that
\begin{itemize}
\item[$(\mathbf A_m)$] $\seq{(V_i^m)^*}{i \leq K_m}$ is a path in $\mathsf{G}_m$.
\item[$(\mathbf B_m)$] For every $A,B \in \P_m$ with $A^* \tox{\mathsf{G}_m} B^*$, there is at least one $i < K_m$ such that $V_i^m = A$ and $V_{i+1}^m = B$.
\item[$(\mathbf C_m)$] If $A$ denotes the (unique) member of $\P_{m-1}$ containing $V^m_0$, then ${(V^{m-1}_{K_{m-1}})^* \tox{\mathsf{G}_{m-1}} A^*}$.
\end{itemize}

To construct $\seq{V_i^n}{i \leq K_n}$, first consider the last set $V^{n-1}_{K_{n-1}}$ in the finite sequence constructed at the previous step of our recursion. Choose any $A \in \P_{n-1}$ such that $(V^{n-1}_{K_{n-1}})^* \tox{\mathsf{G}_{n-1}} A^*$, and then choose $V^n_0$ to be any member of $\P_n$ contained in $A$. Note that some such choice of $A$ is possible because $\mathsf{G}_{n-1}$ is transitive, and some such choice of $V^n_0$ is possible because $\P_n$ refines $\P_{n-1}$. This choice of $V^n_0$ ensures that hypothesis $(\mathbf C_n)$ holds. Now that $V^n_0$ is chosen, we proceed as in the base step to find the rest of $\seq{V^n_i}{i \leq K_n}$ satisfying $(\mathbf A_n)$ and $(\mathbf B_n)$. As in the base step, it is the transitivity of $\mathsf{G}_n$ that enables us to find a finite sequence $\seq{V^n_i}{i \leq K_n}$ of sets in $\P_n$ satisfying $(\mathbf A_n)$ and $(\mathbf B_n)$.

This completes the recursion. Thus we obtain an infinite sequence of finite sequences $\seq{V^n_i}{i \leq K_n}$, each satisfying $(\mathbf A_n)$ and $(\mathbf B_n)$, and also satisfying $(\mathbf C_n)$ when $n > 0$. By concatenating these infinitely many finite paths, we obtain a single infinite sequence: that is, define $\seq{V_m}{m \in \w}$ by setting
$$V_m = V^n_i \ \text{ when } m = K_0+K_1+\dots+K_{n-1}+n+i.$$
Next, define $f: \w \to \w$ by setting
$$f(m) = \min (V_m \setmins \{f(0),f(1),\dots,f(m-1)\})$$
for all $m \in \w$. This function is well-defined because each $V_m$ is infinite. This function is clearly injective, and we claim that it is also surjective. For suppose $f$ is not surjective, and let $\ell$ be the least natural number not in the image of $f$. There is some $n \in \w$ such that every $\ell' < \ell$ is equal to $f(j)$ for some $j < K_0+K_1+\dots+K_{n-1}+n = N$. For every $i \leq K_n$, $V_{N+i} = V^n_i$. As $\P_n$ is a partition of $\w$, there is some $A \in \P_n$ with $m \in A$. Because a transitive graph has no isolated vertices, and because $\mathsf{G}_n$ is transitive by Lemma~\ref{thm:equiv}, condition $(\mathbf B_n)$ implies $\set{i < K_n}{V_i^n = A} \neq \0$. If $i$ is the least index $<\!K_n$ such that $A = V^n_i$, then our definition of $f$ implies 
$$f(N+i) = \min(V^n_i \setmins\{f(0),f(1),\dots,f(N+i-1)\}) = m.$$
This contradicts our choice of $m$ and proves $f$ is surjective. Thus $f$ is a bijection $\w \to \w$.

Let $h$ denote the function $f(n) \mapsto f(n+1)$.
Roughly, we may think of the bijection $f$ as a relabelling of the points of $\w$, and then think of $h$ as the (relabelled) successor map. 
We claim that 
$$\textstyle h^* \,\in\, \Iso(\s) \cap \bigcap_{n \in \w}\gen{\V_n,\mathsf{G}_n} \,=\, \bigcap_{n \in \w}\gen{\V_n,\mathsf{G}_n}_\s \,\sub\, V \cap \bigcap_{n \in \w}U_n.$$

Observe that $\Iso(\s) \cap \bigcap_{n \in \w}\gen{\V_n,\mathsf{G}_n} \,=\, \bigcap_{n \in \w}\gen{\V_n,\mathsf{G}_n}_\s$ by definition, and $\bigcap_{n \in \w}\gen{\V_n,\mathsf{G}_n}_\s \,\sub\, V \cap \bigcap_{n \in \w}U_n$ by our construction of the $\V_n$ and $\mathsf{G}_n$ (specifically by property $(3)$ stated in their construction). So it remains to show that $h^* \in \Iso(\s)$ and that $h^* \in \gen{\V_n,\mathsf{G}_n}$ for every $n \in \w$.

Note that $f$ and $h$ are mod-finite permutations of $\w$, so $f^*,h^* \in \autstar$. To see that $h^* \in \Iso(\s)$, let $s$ denote the successor function $n \mapsto n+1$ and note that $f \circ s = h \circ f$. This implies 
$f^* \circ s^* = h^* \circ f^*$.
But $s^* = \s$, so this shows that $f^*$ is an isomorphism from $\s$ to $h^*$. Hence $h^* \in \Iso(\s)$.

It remains to show $h^* \in \gen{\V_n,\mathsf{G}_n}$ for all $n \in \w$.
Fix $n \in \w$. For any two $A,B \in \P_n$ (not necessarily distinct), let 
$$E_{A \to B} = \set{f(m)}{f(m) \in A \text{ and } f(m+1) \in B}.$$

\begin{claim}
$E_{A \to B}$ is infinite if and only if $A^* \tox{\mathsf{G}_n} B^*$.
\end{claim}
\begin{proof}[Proof of claim]
Suppose $\tilde n \geq n$, and consider the following two statements:
\vspace{1mm}
\begin{minipage}{\textwidth}
\begin{enumerate}[(i)]
\item $A^* \tox{\mathsf{G}_n}B^*$
\item $\tilde A^* \tox{\mathsf{G}_{\tilde n}} \tilde B^*$ for some $\tilde A,\tilde B \in \P_{\tilde n}$ such that $\tilde A \sub A$ and $\tilde B \sub B$.
\end{enumerate}
\end{minipage}
\vspace{1mm}
We claim that $(i)$ and $(ii)$ are equivalent. To see this, recall that $\0 \neq \gen{\V_{\tilde n},\mathsf{G}_{\tilde n}} \sub \gen{\V_n,\mathsf{G}_n}$ and fix some $g \in \gen{\V_{\tilde n},\mathsf{G}_{\tilde n}}$. Because $\mathcal P_{\tilde n}$ refines $\P_n$,
\begin{align*}
A^* \tox{\mathsf{G}_{n}} B^* \quad  \Leftrightarrow& \quad g[A^*] \cap B^* \neq \0 \\
\Leftrightarrow& \quad g[\tilde A^*] \cap \tilde B^* \neq \0 \text{ for some } \tilde A,\tilde B \in \P_{\tilde n} \text{ with } \tilde A \sub A, \tilde B \sub B 
\end{align*}
so that $(i)$ and $(ii)$ are equivalent as claimed.

$E_{A \to B}$ is infinite if and only if there are infinitely many $m \in \w$ such that $f(m) \in A$ and $f(m+1) \in B$. 
Recall that $f$ was defined from the sequence $\seq{V_m}{m \in \w}$ so that $f(m) \in V_m$ for all $m$.
For all but finitely many $m$, $V_m$ is a member of a partition $\P_{\tilde n}$ for some $\tilde n \geq n$, which means that $\P_{\tilde n}$ refines $\P_n$; thus $V_m$ is either contained in or disjoint from each of $A$ and $B$.
Consequently, $E_{A \to B}$ is infinite if and only if there are infinitely many $m$ such that $V_m \sub A$ and $V_{m+1} \sub B$. 

Suppose $A^* \tox{\mathsf{G}_n} B^*$. Let $\tilde n > n$. 
Because $(i)$ and $(ii)$ are equivalent, there are some $\tilde A,\tilde B \in \P_{\tilde n}$ with $\tilde A \sub A$ and $\tilde B \sub B$, such that $\tilde A^* \tox{\mathsf{G}_{\tilde n}} \tilde B^*$. By property $(\mathbf B_{\tilde n})$, there is some $i < K_{\tilde n}$ such that $V^{\tilde n}_i = \tilde A$ and $V^{\tilde n}_{i+1} = \tilde B$; so taking $m = K_0+\dots+K_{\tilde n-1}+\tilde n+i$, we get $V_m = \tilde A \sub A$ and $V_{m+1} = \tilde B \sub B$. Because this holds for every $\tilde n \geq n$, there are infinitely many $m$ such that $V_m \sub A$ and $V_{m+1} \sub B$. By the previous paragraph, this means $E_{A \to B}$ is infinite.

Conversely, suppose $E_{A \to B}$ is infinite; equivalently, suppose there are infinitely many $m$ such that $V_m \sub A$ and $V_{m+1} \sub B$. In particular, there is then some such $m$ with $m > K_0+K_1+\dots+K_{n-1}+n$. We consider two cases, according to whether $V_m$ and $V_{m+1}$ come from the same partition $\P_{\tilde n}$ or not. For the first case, suppose $V_m,V_{m+1} \in \P_{\tilde n}$ for some $\tilde n \geq n$. Then condition $(\mathbf A_{\tilde n})$ implies that $V_m^* \tox{\mathsf{G}_{\tilde n}} V_{m+1}^*$, and (using the assertion $(ii) \Rightarrow (i)$ proved above, with $\tilde A = V_m$ and $\tilde B = V_{m+1}$) this implies $A^* \tox{\mathsf{G}_n}B^*$. In the other case, we must have $V_m \in \P_{\tilde n-1}$ and $V_{m+1} \in \P_{\tilde n}$ for some $\tilde n > n$. But then condition $(\mathbf C_{\tilde n})$ implies that $V_m^* \tox{\mathsf{G}_{\tilde n-1}} \tilde B^*$ for some $\tilde B \in \P_{\tilde n-1}$ with $V_{m+1} \sub \tilde B$. Because $V_{m+1} \sub B$ and $\tilde B$ comes from a partition $\P_{\tilde n-1}$ that refines the partition $\P_n$ containing $B$, we have $V_{m+1} \sub \tilde B$. Hence (using the assertion $(ii) \Rightarrow (i)$ proved above, with $\tilde A = V_m$ and $\tilde B$ as in the previous sentence), this implies $A^* \tox{\mathsf{G}_n}B^*$.
\end{proof}

Returning to the proof of the lemma, observe that by our definition of $h$, if $A,B \in \P_n$ then
$$n \in A \text{ and } h(n) \in B \ \ \Leftrightarrow \ \ n \in E_{A \to B}.$$
which implies $h[A] \cap B$ is infinite if and only if $E_{A \to B}$ is infinite.
From this and the claim above, it follows that for $A,B \in \P_n$,
\begin{align*}
h^*[A^*] \cap B^* = (h[A] \cap B)^* \neq \0 \ &\Leftrightarrow \ h[A] \cap B \text{ is infinite} \\ 
&\Leftrightarrow \ E_{A \to B} \text{ is infinite} \ \Leftrightarrow \ A^* \tox{\mathsf{G_n}} B^*.
\end{align*}
Because $\V_n = \set{A^*}{A \in \P_n}$, this shows that $\mathsf{Hit}(\V_n,h^*) = \mathsf{G}_n$. Therefore $h^* \in \gen{\V_n,\mathsf{G}_n}$. As $n$ was arbitrary, $h^* \in \bigcap_{n \in \w}\gen{\V_n,\mathsf{G}_n}$ as claimed.
\end{proof}

Notice that in the previous lemma, we actually showed a bit more than was stated: every countable intersection of dense open subsets of $\Iso(\s)$ contains a dense set of \emph{trivial} maps. Let us point out that, by modifying the proof of Theorem 2.3 in \cite{BDHHMU}, one may show that every non-empty $G_\delta$ subset of $\autstar$ contains a trivial map. Thus it is not entirely surprising that the statement of Lemma~\ref{lem:baire} can be strengthened in this way.

\begin{lemma}\label{lem:baire2}
$\Iso(\s^{-1})$ is a Baire space. That is, if $\set{U_n}{n \in \w}$ is a collection of dense open subsets of $\Iso(\s^{-1})$, then $\bigcap_{n < \w}U_n$ is dense in $\Iso(\s^{-1})$. 
\end{lemma}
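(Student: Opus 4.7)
The plan is to derive Lemma~\ref{lem:baire2} directly from Lemma~\ref{lem:baire} using the fact that $\autstar$ is a topological group. By Proposition~\ref{prop:group}, the inversion map $\iota : h \mapsto h^{-1}$ is a self-homeomorphism of $\autstar$. I would first verify that $\iota$ carries $\Iso(\s)$ bijectively onto $\Iso(\s^{-1})$: if $k \circ h = \s \circ k$ for some $k \in \autstar$, then taking inverses of both sides and rearranging yields $k \circ h^{-1} = \s^{-1} \circ k$, so $h \in \Iso(\s)$ implies $h^{-1} \in \Iso(\s^{-1})$. The reverse containment follows by the symmetric argument, and since $\iota \circ \iota = \mathrm{id}_{\autstar}$, the restriction of $\iota$ to $\Iso(\s)$ is a homeomorphism onto $\Iso(\s^{-1})$.

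Once this is established, the lemma is immediate, because being a Baire space is a topological invariant and $\Iso(\s)$ is Baire by Lemma~\ref{lem:baire}. More concretely, given a sequence $\set{U_n}{n \in \w}$ of dense open subsets of $\Iso(\s^{-1})$, the preimages $\iota^{-1}[U_n]$ form a sequence of dense open subsets of $\Iso(\s)$, whose intersection is dense there by Lemma~\ref{lem:baire}; applying $\iota$ once more shows that $\bigcap_{n < \w} U_n$ is dense in $\Iso(\s^{-1})$.

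There is essentially no obstacle here: all the real work was carried out in the proof of Lemma~\ref{lem:baire}, and the only remaining content is the formal observation that inversion sends the $\s$-isomorphism class homeomorphically onto the $\s^{-1}$-isomorphism class. Thus the entire argument is a short corollary of Lemma~\ref{lem:baire} and Proposition~\ref{prop:group}.
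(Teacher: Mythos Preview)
Your proposal is correct and matches the paper's proof essentially verbatim: the paper also invokes Proposition~\ref{prop:group} to note that inversion is a self-homeomorphism of $\autstar$ restricting to a homeomorphism $\Iso(\s)\to\Iso(\s^{-1})$, and then cites Lemma~\ref{lem:baire}. Your additional verification that $\iota$ carries $\Iso(\s)$ onto $\Iso(\s^{-1})$ is a fine elaboration of a step the paper leaves implicit.
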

\begin{proof}
Recall that $\H(\w^*)$ is a topological group by Proposition~\ref{prop:group}. In particular, the map $h \mapsto h^{-1}$ is a self-homeomorphism of $\H(\w^*)$, and it restricts to a homeomorphism $\Iso(\s) \to \Iso(\s^{-1})$. Thus this lemma follows from the previous one.
\end{proof}

Recall that a subset $B$ of a topological space $X$ has the \emph{property of Baire} if $B = U \triangle M$ for some open $U \sub X$ and meager $M \sub X$. Recall also that, in any topological space $X$, the sets having the property of Baire form a $\s$-algebra containing all the Borel sets.


If $B,C,D \sub X$, then $B$ \emph{separates} $C$ and $D$ if it contains one and misses the other: i.e., if either $B \supseteq C$ and $B \cap D = \0$, or else $B \supseteq D$ and $B \cap C = \0$.

\begin{theorem}\label{thm:main}
No Borel subset of $\autstar$ separates $\Iso(\s)$ and $\Iso(\s^{-1})$.
\end{theorem}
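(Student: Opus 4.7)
The plan is to combine three ingredients: (i) every Borel set has the property of Baire, (ii) $\Iso(\s)$ and $\Iso(\s^{-1})$ are Baire spaces by Lemmas~\ref{lem:baire} and~\ref{lem:baire2}, and (iii) open sets cannot separate $\Iso(\s)$ from $\Iso(\s^{-1})$ by Corollary~\ref{cor:dense}. Suppose for contradiction $B \sub \autstar$ is Borel and separates the two classes; by symmetry we may assume $\Iso(\s) \sub B$ and $\Iso(\s^{-1}) \cap B = \0$.

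The first step is to restrict to a common ambient space. Set $W = \closure{\Iso(\s)}$, which equals $\closure{\Iso(\s^{-1})}$ by Corollary~\ref{cor:dense}, so that both $\Iso(\s)$ and $\Iso(\s^{-1})$ are dense subspaces of $W$. The set $B \cap W$ is Borel in $W$, so it has the property of Baire there: write $B \cap W = U \triangle M$ with $U$ open in $W$ and $M$ meager in $W$.

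The main technical step, and the one I expect to be the principal obstacle, is to show that meagerness in $W$ transfers to meagerness in $\Iso(\s)$ and $\Iso(\s^{-1})$: if $F$ is closed and nowhere dense in $W$, then $F \cap \Iso(\s)$ is closed with empty interior in $\Iso(\s)$, and similarly on the other side. The reason is that for any open $V \sub \autstar$ with $V \cap \Iso(\s) \neq \0$, the set $V \cap W$ is nonempty and open in $W$, hence $(V \cap W) \setmins F$ is nonempty and open in $W$, and by density of $\Iso(\s)$ in $W$ this difference meets $\Iso(\s)$; thus $V \cap \Iso(\s) \not\sub F$. This density-based argument is the part that genuinely requires Corollary~\ref{cor:dense} rather than just a formal restriction.

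Once the transfer is in place, the contradiction unfolds quickly. From $\Iso(\s) \sub B \cap W = U \triangle M$ we get $\Iso(\s) \setmins M \sub U$; since $M \cap \Iso(\s)$ is meager in the Baire space $\Iso(\s)$ (by the transfer plus Lemma~\ref{lem:baire}), the set $\Iso(\s) \setmins M$ is comeager, hence nonempty, and so $U \cap \Iso(\s) \neq \0$. Writing $U = V \cap W$ with $V$ open in $\autstar$, this gives $V \cap \Iso(\s) \neq \0$, and then Corollary~\ref{cor:dense} yields $V \cap \Iso(\s^{-1}) \neq \0$, i.e.\ $U \cap \Iso(\s^{-1}) \neq \0$. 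But the hypothesis $\Iso(\s^{-1}) \cap B = \0$ forces $U \cap \Iso(\s^{-1}) \sub M$, so $M \cap \Iso(\s^{-1})$ contains a nonempty open subset of $\Iso(\s^{-1})$, contradicting the fact that it is meager in the Baire space $\Iso(\s^{-1})$ (by the transfer plus Lemma~\ref{lem:baire2}). This completes the proof.
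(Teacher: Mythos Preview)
Your proof is correct and follows essentially the same approach as the paper: restrict to the common closure $W=\closure{\Iso(\s)}=\closure{\Iso(\s^{-1})}$ (which coincides with the paper's $K=\closure{\Iso(\s)\cup\Iso(\s^{-1})}$), apply the property of Baire there, transfer meagerness to the dense subspaces via Corollary~\ref{cor:dense}, and finish with Lemmas~\ref{lem:baire} and~\ref{lem:baire2}. The only cosmetic difference is that the paper argues directly that $(V\setminus M)\cap\Iso(\s^{-1})\neq\0$, whereas you phrase the endgame as a contradiction by trapping a nonempty relatively open set inside the meager $M\cap\Iso(\s^{-1})$.
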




\begin{proof}
Because the class of Borel sets is closed under taking complements, if a Borel set separates $\Iso(\s)$ and $\Iso(\s^{-1})$, then there is a Borel set containing $\Iso(\s)$ and disjoint from $\Iso(\s^{-1})$.

Suppose $B_0 \sub \autstar$ is Borel and that $B_0 \supseteq \Iso(\s)$. We shall show that $B_0 \cap \Iso(\s^{-1}) \neq \0$. 

Let $K = \closure{\Iso(\s) \cup \Iso(\sinverse)}$. Then $B = B_0 \cap K$ is still a Borel set containing $\Iso(\s)$; moreover, $B$ is relatively Borel in the subspace $K$ of $\autstar$. In particular, $B$ has the property of Baire in $K$. Fix a relatively open $V \sub K$ and a relatively meager $M \sub K$ such that $B = V \triangle M$.

Generally, if $D$ is dense in (some space) $E$, then any somewhere dense subset of $D$ is somewhere dense in $E$. Consequently, if $X$ is nowhere dense in $E$ then $X \cap D$ is nowhere dense in $D$. It follows that if $X$ is meager in $E$ then $X \cap D$ is meager in $D$.

By Theorem~\ref{thm:equiv}, every open subset of $\autstar$ meeting $\Iso(\s^{-1})$ also meets $\Iso(\s)$. It follows that $\Iso(\s)$ is dense in $K$. 
By the previous paragraph, $M \cap \Iso(\s)$ is meager in $\Iso(\s)$. By Lemma~\ref{lem:baire}, this means $M \not\supseteq \Iso(\s)$. Therefore $V \neq \0$.

By the same argument, $\Iso(\s^{-1})$ is dense in $K$, and $M \cap \Iso(\s^{-1})$ is meager in $\Iso(\s^{-1})$. By Lemma~\ref{lem:baire2}, this implies $\Iso(\s^{-1}) \setmins (M \cap \Iso(\s^{-1}))$ is dense in $\Iso(\s^{-1})$.
Hence 
$$(V \setmins M) \cap \Iso(\s^{-1}) = (V \cap \Iso(\s^{-1})) \setmins (M \cap \Iso(\s^{-1})) \neq \0.$$
Hence $B \cap \Iso(\s^{-1}) = (V \triangle M) \cap \Iso(\s^{-1}) \supseteq (V \setmins M) \cap \Iso(\s^{-1}) \neq \0$. As $B_0 \supseteq B$, it follows that $B_0 \cap \Iso(\s^{-1}) \neq \0$. Hence no Borel subset of $\H(\w^*)$ can contain $\Iso(\s)$ without including some of $\Iso(\s^{-1})$.
\end{proof}

Suppose $f: X \to X$ and $g: Y \to Y$ are topological dynamical systems. We say that $g$ is a \emph{quotient} of $f$ if there is a continuous surjection $q: X \to Y$ such that $q \circ f = g \circ q$.

\vspace{.5mm}
\begin{center}
\begin{tikzpicture}[xscale=.85,yscale=.85]

\node at (5.1,0.05) {$\w^*$};
\node at (7.1,0.05) {$\w^*$};
\node at (7.1,2.05) {$\w^*$};
\node at (5.1,2.05) {$\w^*$};
\draw[->] (5.4,2) -- (6.7,2); \node at (6,2.25) {\small $f$};
\draw[->] (5.4,0) -- (6.7,0); \node at (6,-.22) {\small $g$};
\draw[<<-] (7,.35) -- (7,1.68); \node at (7.2,1) {\small $q$};
\draw[<<-] (5,.35) -- (5,1.68); \node at (4.8,1) {\small $q$};
\node at (10,1.25) {\large $f \quotient g$};

\end{tikzpicture}
\end{center}
\vspace{-.5mm}

\noindent This is the standard notion of ``continuous image'' in the category of topological dynamical systems. Recently, in \cite{BrianAOLS}, the quotients of the shift map $\s$ were characterized as follows:

\begin{theorem}\label{thm:brian}
Suppose $X$ is a compact Hausdorff space with weight $\leq\!\aleph_1$. Then $f \in \H(X)$ is a quotient of the shift map $\s \in \autstar$ if and only if $f$ is chain transitive.
\end{theorem}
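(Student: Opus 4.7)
The ``only if'' direction is standard. Given $q : \w^* \to X$ continuous surjective with $q \circ \s = f \circ q$, pick any $a, b \in X$ and any open cover $\U$ of $X$. Choose preimages $\tilde a, \tilde b \in \w^*$ by surjectivity. Since $\s$ is chain transitive (Lemma~\ref{lem:ct}), there is a $\set{q^{-1}[U]}{U \in \U}$-chain $\seq{\tilde x_i}{i \leq n}$ in $\w^*$ from $\tilde a$ to $\tilde b$; applying $q$ and using $q \circ \s = f \circ q$ yields a $\U$-chain from $a$ to $b$ in $X$. Hence $f$ is chain transitive.

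The ``if'' direction is substantive. The plan is to approximate $X$ and $f$ by a coherent $\w_1$-sequence of finite data, and build $q$ as the limit of a correspondingly coherent $\w_1$-sequence of finite clopen partitions of $\w^*$. First, one reduces to the case that $X$ is a Stone space: the Gleason cover (or a similar zero-dimensional cover built from $\clop(X)$ and the complete Boolean algebra it generates) provides an irreducible continuous surjection from a Stone space $Y$ of the same weight, with a lifted homeomorphism $\tilde f \in \H(Y)$ that remains chain transitive; a quotient map $\w^* \to Y$ composed with the cover yields the required $q : \w^* \to X$.

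Now suppose $X$ is a Stone space with $|\clop(X)| \leq \aleph_1$. Fix a continuously increasing sequence $\set{\V_\a}{\a < \w_1}$ of finite clopen partitions of $X$, with $\V_{\a+1}$ refining $\V_\a$, whose union generates $\clop(X)$. By Lemma~\ref{lem:hit} and chain transitivity, each digraph $\mathsf{G}_\a := \mathsf{Hit}(\V_\a, f)$ is transitive. The goal is to build by recursion on $\a$ a coherent family of finite partitions $\P_\a$ of $\w$ into infinite sets, together with bijections $\pi_\a : \P_\a \to \V_\a$, such that $\mathsf{Hit}(\set{A^*}{A \in \P_\a}, \s)$ corresponds to $\mathsf{G}_\a$ under $\pi_\a$, and $\P_{\a+1}$ refines $\P_\a$ compatibly with $\pi_{\a+1}$ refining $\pi_\a$. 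A single successor step mimics the proof of $(1) \Rightarrow (2)$ in Theorem~\ref{thm:equiv}: transitivity of $\mathsf{G}_\a$ allows an infinite path through its vertices traversing every edge infinitely often, which induces a partition of $\w$ realizing the action of $\s$ as the prescribed hitting relation.

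The main obstacle is the limit stages $\lambda < \w_1$. Given a coherent sequence $\seq{(\P_\b, \pi_\b)}{\b < \lambda}$, one must amalgamate it into a single $(\P_\lambda, \pi_\lambda)$ that refines everything before while keeping each piece infinite and still matching the hitting relation prescribed by $\V_\lambda$ under $\s$. This is precisely where the weight restriction $\leq \aleph_1$ is essential: at countable $\lambda$ one may use a diagonalization in $\pnmf$ of tower or almost-disjoint character to produce infinite pieces compatible with all previously built $\P_\b$. Once the recursion is completed, define $q : \w^* \to X$ by sending each $u \in \w^*$ to the unique point in $\bigcap_{\a < \w_1} \pi_\a(A^u_\a)$, where $A^u_\a$ is the member of $\P_\a$ with $u \in (A^u_\a)^*$. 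Continuity follows because $\set{\V_\a}{\a < \w_1}$ generates the topology of $X$; surjectivity follows because every point of $X$ corresponds to an ultrafilter on $\clop(X)$ realized by the construction; and $q \circ \s = f \circ q$ follows at every stage from the condition $\mathsf{Hit}(\set{A^*}{A \in \P_\a}, \s) = \pi_\a^{-1}[\mathsf{G}_\a]$.
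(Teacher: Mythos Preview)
This theorem is not proved in the present paper; it is quoted from the author's earlier work \cite{BrianAOLS}, so there is no in-paper proof to compare against. Your ``only if'' direction is correct and is the standard argument (the paper itself invokes this fact, citing \cite{Akin}).

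Your sketch of the ``if'' direction points in a reasonable direction---a length-$\w_1$ recursion building coherent finite combinatorial approximations to $q$---but several steps are genuinely incomplete or problematic as written. First, the Gleason-cover reduction does not work in general: the absolute of a compact Hausdorff space can have strictly larger weight (for example, the Gleason cover of $[0,1]$ has weight $\continuum$), so you may lose the hypothesis $\mathrm{weight}(X)\leq\aleph_1$; and it is not automatic that chain transitivity lifts along an irreducible map. Second, the phrase ``continuously increasing sequence $\{\V_\a\}_{\a<\w_1}$ of finite clopen partitions'' is incoherent at limit ordinals: if the $\V_\b$ strictly refine one another, no finite partition can refine all $\V_\b$ for $\b<\lambda$. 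The correct bookkeeping is an increasing, continuous chain of countable subalgebras of $\clop(X)$ (or the corresponding countable inverse system of finite quotients), so that limit stages are countable rather than finite. Third, and most importantly, the limit-stage amalgamation---which you correctly flag as the crux---is not argued at all. ``Diagonalization of tower or almost-disjoint character'' is a gesture, not a construction: one must produce a partition of $\w$ into infinite pieces that is simultaneously compatible (mod finite) with an entire $\w$-tower of earlier partitions \emph{and} realizes a prescribed hitting pattern under $\s$, and this is substantially harder than the single-stage argument of Theorem~\ref{thm:equiv}. That amalgamation is where the real content of \cite{BrianAOLS} lies, and your outline does not supply it.
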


In \cite{BrianAOLS} it was asserted that this theorem gives a ``simple'' characterization of the quotients of the shift map having weight $\leq\!\aleph_1$. The topological perspective employed here allows us to formalize this assertion:

\begin{corollary}
If $X$ is a compact Hausdorff space with weight $\leq\!\aleph_1$, then 
$$\set{h \in \H(X)}{h \text{ is a quotient of }\s}$$
is a closed subset of $\H(X)$. In particular, if \ch holds then
$$\quot(\s) = \set{h \in \autstar}{h \text{ is a quotient of }\s}$$
is closed in $\autstar$.
\end{corollary}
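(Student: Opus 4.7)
The plan is to combine the two main ingredients already in the excerpt: Theorem~\ref{thm:brian}, which gives an intrinsic topological characterization of the quotients of $\s$ among homeomorphisms of spaces of weight $\leq\!\aleph_1$, and Corollary~\ref{cor:shiftisct}, which asserts that the set of chain transitive maps is closed in $\H(X)$ for every compact Hausdorff $X$.

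First, I would invoke Theorem~\ref{thm:brian} to rewrite the set in question: under the weight hypothesis on $X$, we have
$$\set{h \in \H(X)}{h \text{ is a quotient of }\s} \ = \ \set{h \in \H(X)}{h \text{ is chain transitive}}.$$
This is a set equality of subsets of $\H(X)$, valid as long as $w(X) \leq \aleph_1$.

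Next, Corollary~\ref{cor:shiftisct} says that the right-hand side is closed in $\H(X)$ for any compact Hausdorff space $X$ (no weight restriction), because the set of non-chain-transitive maps is a union of subbasic open sets of the form $V(\closure{U},U)$. Combining the two yields the first assertion of the corollary.

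For the second assertion, I would observe that the Stone space $\w^*$ has weight equal to $\continuum$, so under \ch its weight equals $\aleph_1$. Thus the weight hypothesis of the first part applies to $X = \w^*$, and we conclude that $\quot(\s)$ is closed in $\autstar$.

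There is no real obstacle here: the content is entirely in Theorem~\ref{thm:brian} and Corollary~\ref{cor:shiftisct}, which have already been proved (or cited), and the corollary is a straightforward composition of the two. The only small point worth noting is that the quotient relation is defined externally (via the existence of a continuous surjection $q$ intertwining the two maps), so without Theorem~\ref{thm:brian} one would have no handle on $\quot(\s)$ as a subset of $\H(X)$; the whole purpose of this corollary is to formalize the idea that \emph{chain transitivity} is the simple topological description of $\quot(\s)$ alluded to in \cite{BrianAOLS}.
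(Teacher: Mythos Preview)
Your proposal is correct and matches the paper's own proof exactly: the paper simply states that the corollary follows immediately from Theorem~\ref{thm:brian} and Corollary~\ref{cor:shiftisct}. Your added remark that $w(\w^*) = \continuum = \aleph_1$ under \ch is the (implicit) justification for the ``in particular'' clause.
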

\begin{proof}
This follows immediately from Theorem~\ref{thm:brian} and Corollary~\ref{cor:shiftisct}.
\end{proof}

It is customary, in the study of $\w^*$, that when one observes some consistent behavior under \ch, one should investigate whether the ``opposite'' behavior occurs under \ocama. 
The topological perspective employed here gives us a framework for doing just this.

\begin{theorem}\label{thm:ocama}
Assuming \ocama, 
$$\quot(\s) = \set{h \in \autstar}{h \text{ is a quotient of }\s}$$
is not Borel in $\autstar$.
\end{theorem}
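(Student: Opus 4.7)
The plan is to argue by contradiction: assuming $\quot(\s)$ is Borel in $\autstar$, I will show that it separates $\Iso(\s)$ from $\Iso(\s^{-1})$, contradicting Theorem~\ref{thm:main}. The inclusion $\Iso(\s) \sub \quot(\s)$ is immediate: if $h = \phi \circ \s \circ \phi^{-1}$ for some $\phi \in \autstar$, then $\phi$ itself is a continuous surjection satisfying $\phi \circ \s = h \circ \phi$, so $h$ is a quotient of $\s$.

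The substance of the argument is to show, under $\ocama$, that $\Iso(\s^{-1}) \cap \quot(\s) = \0$. Given $h$ in this intersection, write $h = \psi \circ \s^{-1} \circ \psi^{-1}$ for some $\psi \in \autstar$, and let $q: \w^* \to \w^*$ be a continuous surjection with $q \circ \s = h \circ q$. Then $\tilde q = \psi^{-1} \circ q$ is a continuous surjection $\w^* \to \w^*$ satisfying the semiconjugacy $\tilde q \circ \s = \s^{-1} \circ \tilde q$. Thus the task reduces to showing: under $\ocama$, no continuous surjection $\w^* \to \w^*$ intertwines $\s$ and $\s^{-1}$ in this way.

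To rule out $\tilde q$, I would leverage the rigidity machinery of Shelah, Stepr\={a}ns, and Veli\v{c}kovi\'c used to prove $\ocama$-triviality of members of $\autstar$, extended to apply to continuous (non-bijective) self-maps of $\w^*$ (in the spirit of Farah's later work on maps between Stone spaces). Such a rigidity principle says that any continuous surjection $\w^* \to \w^*$ is represented, outside a compact error, by a finite-to-one partial function $f: \w \to \w$. The semiconjugacy equation $\tilde q \circ \s = \s^{-1} \circ \tilde q$ then translates into $f(n+1) = f(n)-1$ for cofinitely many $n$, forcing $f$ to strictly decrease at each step on a cofinite set. This is impossible since $f$ takes values in $\w$, yielding the needed contradiction.

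Pulling the pieces together: $\quot(\s)$ would be a Borel subset of $\autstar$ containing $\Iso(\s)$ and disjoint from $\Iso(\s^{-1})$, hence separating these two isomorphism classes, contradicting Theorem~\ref{thm:main}. The principal obstacle is the rigidity step for continuous surjections: one must either invoke or prove a form of $\ocama$-triviality strong enough to pass from self-homeomorphisms to arbitrary continuous surjections of $\w^*$, in a form that converts the semiconjugacy equation into a combinatorial statement on $\w$ that can be refuted directly.
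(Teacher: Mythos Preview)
Your approach is correct and matches the paper's: both show that under \ocama, $\quot(\s)$ contains $\Iso(\s)$ and misses $\Iso(\s^{-1})$, so by Theorem~\ref{thm:main} it cannot be Borel. The paper cites the crucial fact that $\s^{-1}$ is not a quotient of $\s$ from \cite[Theorem~5.7]{BrianAOLS} (building on Farah~\cite{Farah}) rather than sketching the rigidity argument as you do, and it additionally routes through Lemma~\ref{lem:ct} to obtain the stronger equality $\quot(\s) = \Iso(\s)$, but the overall strategy is the same.
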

We recall that \ocama implies $\mathrm{weight}(\w^*) = \continuum = \aleph_2$. Thus this theorem shows, among other things, that the $\aleph_1$ of the previous corollary cannot be improved to an $\aleph_2$ in general. (However, it can be improved to an $\aleph_2$ consistently, e.g. if $\aleph_2 < \pseudo$; see \cite[Theorem 5.10]{BrianAOLS}.)
\begin{proof}[Proof of Theorem~\ref{thm:ocama}]
By Lemma~\ref{lem:ct}, $\s$ and $\s^{-1}$ are chain transitive, and these are (up to isomorphism) the only chain transitive trivial maps in $\autstar$. By a result of Veli\v{c}kovi\'c mentioned already in the introduction, \ocama implies that all members of $\autstar$ are trivial. Thus the chain transitive members of $\autstar$ are precisely those in $\Iso(\s) \cup \Iso(\s^{-1})$.

Every quotient of a chain transitive dynamical system is chain transitive (see, e.g., \cite{Akin}). Thus, by the previous paragraph, if there are no nontrivial members of $\autstar$ then $\quot(\s) \sub \Iso(\s) \cup \Iso(\s^{-1})$. 

Building on work of Farah \cite{Farah}, it was proved in \cite[Theorem 5.7]{BrianAOLS} that \ocama implies $\s^{-1}$ is not a quotient of $\s$. Thus, by the previous paragraph, \ocama implies $\quot(\s) = \Iso(\s)$.

As mentioned in the introduction, van Douwen proved in \cite{vanDouwen} that if there are no nontrivial maps in $\autstar$ then $\Iso(\s) \neq \Iso(\s^{-1})$. In particular, \ocama implies $\Iso(\s) \neq \Iso(\s^{-1})$. By Theorem~\ref{thm:main}, this implies neither $\Iso(\s)$ nor $\Iso(\s^{-1})$ is Borel in $\autstar$. 
\end{proof}

\end{document}